\documentclass[11pt,reqno]{amsart}

\usepackage[T1]{fontenc}
\usepackage[utf8]{inputenc}
\usepackage{lmodern}
\usepackage{microtype}
\usepackage{amsmath,amssymb,amsthm,mathtools}
\usepackage[hidelinks]{hyperref}
\usepackage[nameinlink,noabbrev]{cleveref}
\usepackage{newtxtext,newtxmath}
\usepackage{tikz}
\usetikzlibrary{calc}

\hypersetup{
  pdftitle={Boundary Algorithms for Thompson Groups},
  pdfauthor={Ayberk Zeytin},
  pdfsubject={Farey boundary flips, effective decompositions, and Thompson groups},
  pdfkeywords={modular group, Farey tree, Thompson group, continued fraction, flip, shuffle, algorithm}
}

\tikzset{
  farey edge/.style={
    draw=black,
    line width=0.9pt,
    line cap=round,
    line join=round
  },
  open vertex/.style={
    circle,
    draw=black,
    fill=white,
    line width=0.9pt,
    minimum size=6.2pt,
    inner sep=0pt
  },
  filled vertex/.style={
    circle,
    draw=black,
    fill=black!72,
    line width=0.8pt,
    minimum size=6.2pt,
    inner sep=0pt
  },
  edge label/.style={
    fill=white,
    inner sep=1.5pt,
    font=\small
  }
}

\numberwithin{equation}{section}

\newtheorem{theorem}{Theorem}[section]
\newtheorem{proposition}[theorem]{Proposition}
\newtheorem{lemma}[theorem]{Lemma}
\newtheorem{corollary}[theorem]{Corollary}
\theoremstyle{definition}
\newtheorem{definition}[theorem]{Definition}
\newtheorem{example}[theorem]{Example}
\theoremstyle{remark}
\newtheorem{remark}[theorem]{Remark}

\DeclareMathOperator{\psl}{\mathrm{PSL}_2(\Z)}
\DeclareMathOperator{\pgl}{\mathrm{PGL}_2(\Z)}
\DeclareMathOperator{\gl}{\mathrm{GL}_2(\Z)}
\DeclareMathOperator{\sll}{\mathrm{SL}_2(\Z)}
\newcommand{\Z}{\mathbf{Z}}
\newcommand{\Q}{\mathbf{Q}}
\newcommand{\R}{\mathbf{R}}

\newcommand{\Hh}{\mathbb{H}}
\newcommand{\PP}{\mathbb{P}}
\newcommand{\F}{\mathcal{F}}
\newcommand{\T}{\mathsf{T}}

\newcommand{\cyl}[1]{\mathcal{C}(#1)}
\newcommand{\ev}{\operatorname{ev}}
\newcommand{\id}{\operatorname{id}}

\title[Boundary Algorithms for Thompson Groups]
{Boundary Algorithms for Thompson Groups}

\author{Ayberk Zeytin}
\address{Department of Mathematics, Galatasaray University, Istanbul, T\"urkiye}

\subjclass[2020]{Primary 20F05, 20F65; Secondary 11A55, 05C63, 57K20}
\keywords{modular group, Farey tree, continued fraction, Thompson group, flip, shuffle, smashed boundary, effective decomposition}

\begin{document}

\begin{abstract}
	We construct a deterministic Farey-boundary algorithm converting a cyclic prefix table for an element of $\mathsf T$ into a word in addressed boundary flips. The reduction uses rational boundary pairs and cyclic order, with a marked edge determining the residual modular correction. Its unmarked stage uses at most twice the minimum number of diagonal exchanges. For input size $s$, the complete output has $O(s)$ factors, encoded length $O(s^2)$, and running time $O(s^3)$. Adjoining shuffles extends the construction to $\mathsf V$ with polynomial bounds. We also give explicit boundary generating sets and their local flip--shuffle relations. Changes of generators in the Lochak--Schneps and Bleak--Quick presentations yield a two-flip presentation of $\mathsf T$ and a presentation of $\mathsf V$ with two flips, one shuffle, and nine relations.
\end{abstract}

\maketitle

\section{Introduction}
\label{sec:intro}

Thompson's groups $\mathsf F$, $\mathsf T$, and $\mathsf V$ are central examples in combinatorial group theory, dynamics, and low-dimensional topology. In their prefix-code description, elements act on the binary Cantor space by finite prefix replacements. The correspondence between prefixes preserves linear order for $\mathsf F$ and cyclic order for $\mathsf T$, while no order-preservation condition is imposed for $\mathsf V$.  Equivalently, these groups may be represented by tree-pair diagrams; see \cite{CannonFloydParry1996,CannonFloyd2011}.  We use prefix-code tables as the finite input model throughout this paper.

A second classical description of $\mathsf T$ arises from the Farey tessellation, on which the modular group $\psl$ acts. Replacing one diagonal of an ideal quadrilateral by the other gives an elementary flip of the Ptolemy groupoid. Penner developed the corresponding constructions in \cite{Penner1993,Penner1997}; the resulting Ptolemy group was identified with Thompson's group $\mathsf T$ in \cite{Imbert1997}, and its groupoid form and finite presentation were further developed in \cite{LochakSchneps1997}. Dehornoy obtained presentations of $\mathsf F$ and $\mathsf V$ using local transformations of parenthesized expressions: changes of parentheses for $\mathsf F$, and changes of parentheses together with exchanges of subexpressions for $\mathsf V$ \cite{Dehornoy2005}. 

We present a deterministic algorithm that converts a cyclic prefix-code table for an element of $\mathsf T$ into a word in addressed Farey-boundary flips. The reduction uses rational boundary pairs and cyclic order, while a transported oriented edge determines the final modular correction. We bound both the fully encoded output length and the running time. Adjoining shuffles extends this procedure to $\mathsf V$. The underlying flip connectivity and polygonal fan reduction are classical \cite{SleatorTarjanThurston1988}. Related constructions include the Mosher-type normal forms and combing for $\mathsf T$ developed by Funar and Kapoudjian \cite{FunarKapoudjian2011}, following Mosher's work on mapping class groups \cite{Mosher1995}, together with tree-pair and prefix-table normal forms and metric algorithms for Thompson groups \cite{CannonFloydParry1996,Fordham2003, BurilloClearySteinTaback2009,Birget2004}.

The Farey boundary connects the finite prefix description with the geometric reduction. Binary coding identifies the ordinary boundary with the binary Cantor space, where flips and shuffles act by explicit prefix replacements. Identifying the two ends representing each rational point gives the smashed boundary, which continued-fraction evaluation identifies with $\PP^1(\R)$. On this quotient, rational boundary pairs encode the configurations used by the flip algorithm, and cyclic order determines which pairs are linked. The distinguished oriented edge retains the information needed to recover the group element from the reduced configuration. These coordinates identify the flip group with $\mathsf T$, the flip--shuffle group with $\mathsf V$, and the group generated by flips supported in a fixed proper cylinder with $\mathsf F$. 

Our main theorem, \cref{thm:smashed-boundary-algorithm},
gives quantitative bounds for this construction. For a prefix table of size $s$, measured by its number of entries and total prefix length, the algorithm returns $O(s)$ factors in the full address-indexed flip family. Writing all supporting addresses gives encoded length $O(s^2)$, and the running time is $O(s^3)$. With the ordering conventions fixed, the output depends only on the represented group element. The unmarked reduction uses at most twice the minimum number of diagonal exchanges; this comparison excludes the final modular correction.

For $\mathsf V$, we combine the $\mathsf T$ algorithm with a decomposition of the finite cylinder permutation into conjugates of a basic shuffle. This yields polynomial bounds for the output length and running time. The cylinder-permutation factorization is standard \cite{CannonFloydParry1996,BleakQuick2017}; the construction supplies explicit flip--shuffle words by applying the boundary reduction to its cyclic factors.

The boundary realization gives the generating sets
\[
\mathsf T=\langle\phi_I,\phi_{SL^2}\rangle,
\qquad
\mathsf V=\langle\phi_I,\phi_{SL^2},\sigma_S\rangle.
\]
We describe the local relations among addressed flips and shuffles and derive finite presentations in these boundary generators. For $\mathsf V$, the resulting presentation has nine defining relations. Completeness follows by explicit changes of generators from the presentations of Lochak--Schneps \cite{LochakSchneps1997} and Bleak--Quick \cite{BleakQuick2017}. The contribution is their Farey-boundary formulation, compatible with the word-extraction algorithms developed here.

The paper is organized as follows.
Section~\ref{sec:thompson-prefix} recalls the simultaneous prefix-code
definitions of $\mathsf F$, $\mathsf T$, and $\mathsf V$.
Section~\ref{sec:modular-case} develops the Farey tree, its smashed
boundary, and the continued-fraction coordinates.
Section~\ref{sec:boundary-operators} defines boundary flips and shuffles,
identifies the groups they generate, and records their local relations.
Section~\ref{sec:effective-decompositions} gives the effective
decomposition algorithms and their complexity bounds for
$\mathsf T$ and $\mathsf V$.  Finally,
Section~\ref{sec:boundary-presentations} derives the boundary presentation
theorems as structural applications of the preceding dictionary.

\section{Thompson groups as prefix replacements}
\label{sec:thompson-prefix}

Let $\mathcal B=\{0,1\}^{\mathbb N}$ be the binary Cantor space, and let $<$ denote the
lexicographic order on finite binary words. For
a finite binary word $u$, write $u\mathcal{B}:=\{u\xi:\xi\in\mathcal{B}\}$. A finite set $P$ of binary words is a \emph{finite maximal prefix code} (also called a \emph{complete prefix code}) if the cylinders $u\mathcal{B}$, $u\in P$, are pairwise disjoint and cover
$\mathcal{B}$. 
If
\[
 P=\{p_1<\cdots<p_n\},
 \qquad
 Q=\{q_1<\cdots<q_n\}
\]
are finite maximal prefix codes and $\tau\in\operatorname{Sym}(n)$,
define the prefix-replacement homeomorphism
\[
 h_{P,Q,\tau}(p_i\xi)=q_{\tau(i)}\xi,
 \qquad i=1,\ldots,n,\quad \xi\in\mathcal B.
\]
We call the triple \((P,Q,\tau)\), or equivalently the correspondence \(p_i\mapsto q_{\tau(i)}\), a \emph{prefix-code table} for \(h_{P,Q,\tau}\).
Let $\mathscr C_n$ denote the set of finite maximal prefix codes with $n$
elements.  Allowing $n$ and the two codes to vary, Thompson's groups may be
defined simultaneously by
\[
 \mathsf V
 =
 \left\{
 h_{P,Q,\tau}:n\geq1,\ P,Q\in\mathscr C_n,\
 \tau\in\operatorname{Sym}(n)
 \right\},
\]
\[
 \mathsf T
 =
 \left\{
 h_{P,Q,\tau}:n\geq1,\ P,Q\in\mathscr C_n,\
 \tau(i)\equiv i+k\pmod n
 \text{ for some }k\in\mathbb Z/n\mathbb Z
 \right\},
\]
and
\[
 \mathsf F
 =
 \left\{
 h_{P,Q,\operatorname{id}}:n\geq1,\ P,Q\in\mathscr C_n
 \right\}.
\]
For $\mathsf T$, the permutation of the ordered prefix lists must be a
cyclic shift, not an arbitrary permutation consisting of a single cycle.
Indices are read modulo $n$, with values in $\{1,\ldots,n\}$.

An element generally has more than one prefix-code table.  A simultaneous
expansion replaces one row $p_i\mapsto q_{\tau(i)}$ by the two rows
\[
 p_i0\mapsto q_{\tau(i)}0,
 \qquad
 p_i1\mapsto q_{\tau(i)}1.
\]
It does not change the resulting homeomorphism, and any two tables for the
same element have a common expansion. Also, simultaneous expansions preserve
linear and cyclic order. Conversely, sibling rows \(p_i0\mapsto q_{\tau(i)}0\) and \(p_{i}1\mapsto q_{\tau(i)}1\) may be merged to \(p_{i}\mapsto q_{\tau(i)}\); this is called a reduction. Every prefix-code table admits a unique reduced table, equivalently a unique table with the minimum number of rows, and every other table is obtained from it by expansion.   
Composition is obtained after passing to a common refinement;
thus the three sets are groups and the inclusions
$\mathsf F<\mathsf T<\mathsf V$ are immediate.  This is the standard
prefix-code, or tree-pair, description of the Thompson groups
\cite[\S\S4--6]{CannonFloydParry1996}; see also
\cite[\S2.1]{BleakQuick2017} for the prefix-exchange model of $\mathsf V$.

\begin{remark}
	Identifying the two binary expansions of each dyadic point, and also
	the endpoints in the circle case, recovers the standard piecewise-linear
	realizations of $\mathsf F$ on $[0,1]$ and $\mathsf T$ on $\R/\Z$;
	see \cite{CannonFloydParry1996,CannonFloyd2011}.
\end{remark}

\section{The Farey tree and its boundary coordinates}
\label{sec:modular-case}

We construct the Farey tree from the standard action of $\psl$ on the upper
half-plane $\Hh$ and identify its smashed boundary with $\PP^1(\R)$.
Standard continued fractions then provide the word coordinates used
throughout the paper.

\subsection{The Farey tree and its Cantor boundary}
\label{sec:farey-tree}
%

We use the projective action
\[
g\cdot z=\frac{az+b}{cz+d},
\qquad
g=\begin{pmatrix}a&b\\c&d\end{pmatrix}\in\gl,
\]
with the usual conventions at $\infty$, and the matrices
\begin{equation}\label{eq:ULS}
	U=\begin{pmatrix}0&1\\1&0\end{pmatrix},\qquad
	L=\begin{pmatrix}1&-1\\1&0\end{pmatrix},\qquad
	S=\begin{pmatrix}0&-1\\1&0\end{pmatrix}.
\end{equation}
Put $T=\left(\begin{smallmatrix}1&1\\0&1\end{smallmatrix}\right)
=-LS$.
The modular group $\psl=\sll/\{\pm I\}$ preserves
$\Hh$ and has presentation
\[
\psl=\langle S,L\mid S^2=L^3=1\rangle
\cong(\Z/2\Z)*(\Z/3\Z).
\]

The element $S$ fixes $i$, labelled by $\circ$, while $L$ fixes
$\zeta=e^{\pi i/3}$, labelled by $\bullet$.  Label the geodesic segment joining
$i$ to $\zeta$ by $e_I$.  The Farey tree $\F$ is the union of the
$\psl$-translates of $\circ$, $\bullet$ and $e_I$: its white and black vertices are the respective
orbits of $\circ$ and $\bullet$, and the edge $W\cdot e_I$ is denoted by
$e_W$.  By construction, $\F$ is a bipartite tree.  The orientation of
$\Hh$ equips $\F$ with a ribbon structure, hence with a cyclic order at every
black vertex.

The \emph{Cantor boundary} of $\F$ based at $I$, denoted
$\partial_I\F$, consists of infinite non-backtracking paths
starting at the distinguished edge.  Every end has a unique
turn-word representation
\[
\varepsilon=S^\delta C_1C_2\cdots,
\qquad
\delta\in\{0,1\},\qquad C_j\in\{LS,L^2S\}.
\]
Here $\delta$ specifies the initial side of the distinguished
edge, and each $C_j$ specifies a subsequent forward turn.

\subsection{Continued fractions and the smashed boundary}

The tree $\F$ has two ends corresponding to each rational number: one eventually turns only to the left and the other eventually turns only to the right.  Indeed, finding the standard continued fraction expansion of a real number $x$ can be encoded in terms of the group $\pgl $ :
\begin{align*}
	x  & = a_{0} + \cfrac{1}{a_{1}+\cfrac{1}{a_{2} + \cfrac{1}{a_{3} + \cfrac{1}{\ddots}}}} \\
						& = T^{a_{0}} U \left({a_{1}+\cfrac{1}{a_{2} + \cfrac{1}{a_{3} + \cfrac{1}{\ddots}}}}\right)  \\
						& = T^{a_{0}} U T^{a_{1}} U \cdots
\end{align*}
The real number $x \in \Q$ if and only if its standard continued fraction expansion is finite. In that case,
\[ 
	x = \frac{p}{q} = T^{a_{0}} U T^{a_{1}} U \ldots T^{a_{k}} (0) = T^{a_{0}} U T^{a_{1}} U \ldots T^{a_{k}} U( \infty) 
\]
The identities $T=LS$ and $UTU=L^2S$ in $\pgl$, which give
$UT^aU=(L^2S)^a$, allow us to write any $x\in\Q$ as
\begin{align*}
	x = 
	\begin{cases}
		\left((LS)^{a_{0}}(L^2S)^{a_{1}}\right) \cdots \left((LS)^{a_{k-1}}(L^{2}S)^{a_{k}}\right)  (\infty )\\
		\left((LS)^{a_{0}}(L^2S)^{a_{1}}\right) \cdots \left((LS)^{a_{k-1}}(L^{2}S)^{a_{k} - 1}\right) (LS) (0)
	\end{cases}
\end{align*}
when the last index $k$ of the continued fraction is odd. When $k$ is even:
\begin{align*}
	x = 
	\begin{cases}
		\left((LS)^{a_{0}}(L^2S)^{a_{1}}\right) \cdots \left((LS)^{a_{k}}\right) (0) \\
		\left((LS)^{a_{0}}(L^2S)^{a_{1}}\right) \cdots \left((LS)^{a_{k} - 1}(L^{2}S)\right) (\infty)
	\end{cases}
\end{align*}
Replacing $0$ by $(L^{2}S)^{\infty}$ and $\infty$ by $(LS)^{\infty}$, we see that the planar tree has two ends corresponding to each element of $\Q \cup \{\infty\}$: one eventually turns only to the left and the other eventually turns only to the right. We therefore identify these two ends and obtain the \emph{smashed boundary} of the Farey tree.
	
Every end determines a nested sequence of closed Farey intervals.
Their intersection consists of a single point, defining the
continuous evaluation map
\begin{equation}\label{eq:ev-homeo}
	\ev:\partial_I\F\longrightarrow\PP^1(\R).
\end{equation}
The nested Farey subdivisions show that $\ev$ is surjective,
one-to-one over the irrationals, and two-to-one over
$\PP^1(\Q)$.  Each rational fibre consists of two eventually
constant ends approaching the same point from opposite sides.

\begin{definition}\label{def:smashed}
	The \emph{smashed boundary} is
	\[
	\partial_I^\#\F:=\partial_I\F/\!\sim,
	\qquad
	\varepsilon\sim\eta
	\ \Longleftrightarrow\
	\ev(\varepsilon)=\ev(\eta).
	\]
\end{definition}

Evaluation therefore induces a continuous bijection
\begin{equation}\label{eq:ev-bar-homeo}
	\overline{\ev}:\partial_I^\#\F
	\xrightarrow{\ \cong\ }\PP^1(\R).
\end{equation}
It is a homeomorphism because its domain is compact and its
codomain is Hausdorff.

\section{Boundary flips, shuffles, and Thompson groups}
\label{sec:boundary-operators}

We now define the two elementary boundary operations, describe them in
projective and binary coordinates, and identify the groups they generate.

\subsection{Boundary flips and shuffles}
\label{sec:flips-shuffles}

Put
\[
 A:=LS,\qquad B:=L^2S.
\]
Thus $A$ and $B$ are the left and right forward turns, respectively, at a
black vertex of $\F$.  
An \emph{admissible prefix} is a finite initial word of the form
\[
W=S^\delta C_1\cdots C_k,
\qquad
\delta\in\{0,1\},\quad k\geq0,\quad C_i\in\{A,B\}.
\]
Its \emph{cylinder} is
\[
\cyl{W}
:=
\left\{
S^\delta C_1\cdots C_kD_1D_2\cdots:
D_i\in\{A,B\}
\right\}.
\]
Thus $\cyl{I}$ consists of ends whose initial side is represented
by $\delta=0$, while $\cyl{S}$ consists of those with $\delta=1$,
and
\[
\partial_I\F=\cyl{I}\sqcup\cyl{S}.
\]
In particular, $\cyl{I}$ is a proper cylinder.
In the formulas below, $Y$ denotes an infinite suffix compatible
with the displayed prefix, and no cancellation is allowed.

\begin{definition}\label{def:left-flip}
On the set of ends of the Farey tree, the \emph{flip at $WL$} is defined as the map
\[
 \phi_{WL}(X)=
 \begin{cases}
  WB^2Y,&X=WBY,\\
  WBAY,&X=WABY,\\
  WAY,&X=WA^2Y,\\
  X,&X\notin\cyl{W}.
 \end{cases}
\]
Its inverse is the \emph{flip at $WL^2$}:
\[
 \phi_{WL^2}(X)=
 \begin{cases}
  WBY,&X=WBBY,\\
  WABY,&X=WBAY,\\
  WAAY,&X=WAY,\\
  X,&X\notin\cyl{W}.
 \end{cases}
\]
\end{definition}
The local picture is shown in \cref{fig:flip}.

\begin{figure}[ht]
  \centering
  \begin{tikzpicture}[x=0.8cm,y=0.8cm]
  \coordinate (top) at (0,5.8);
  \coordinate (root) at (0,4.55);

  \coordinate (left-open)  at (-3.15,3.25);
  \coordinate (right-open) at ( 3.15,3.25);
  \coordinate (left-filled)  at (-4.25,2.05);
  \coordinate (right-filled) at ( 4.25,2.05);

  \coordinate (ll-open) at (-5.55,0.80);
  \coordinate (lr-open) at (-3.10,0.80);
  \coordinate (rl-open) at ( 3.10,0.80);
  \coordinate (rr-open) at ( 5.55,0.80);

  \coordinate (ll-end) at (-6.25,-0.40);
  \coordinate (lr-end) at (-2.40,-0.40);
  \coordinate (rl-end) at ( 2.40,-0.40);
  \coordinate (rr-end) at ( 6.25,-0.40);

  \draw[farey edge] (top) -- (root)
    node[edge label,midway,right=4pt] {$e_W$};

  \draw[farey edge] (root) -- (left-open)
    node[edge label,pos=0.47,above left=1pt] {$e_{WL^{2}}$};
  \draw[farey edge] (root) -- (right-open)
    node[edge label,pos=0.47,above right=1pt] {$e_{WL}$};

  \draw[farey edge] (left-open) -- (left-filled)
    node[edge label,midway,left=4pt] {$e_{WB}$};
  \draw[farey edge] (right-open) -- (right-filled)
    node[edge label,midway,right=4pt] {$e_{WA}$};

  \draw[farey edge] (left-filled) -- (ll-open)
    node[edge label,pos=0.48,above left=1pt] {$e_{WBL^{2}}$};
  \draw[farey edge] (left-filled) -- (lr-open)
    node[edge label,pos=0.48,above right=1pt] {$e_{WBL}$};
  \draw[farey edge] (right-filled) -- (rl-open)
    node[edge label,pos=0.48,above left=1pt] {$e_{WAL^{2}}$};
  \draw[farey edge] (right-filled) -- (rr-open)
    node[edge label,pos=0.48,above right=1pt] {$e_{WAL}$};

  \draw[farey edge] (ll-open) -- (ll-end)
    node[edge label,midway,left=4pt] {$e_{WB^{2}}$};
  \draw[farey edge] (lr-open) -- (lr-end)
    node[edge label,midway,right=4pt] {$e_{WBA}$};
  \draw[farey edge] (rl-open) -- (rl-end)
    node[edge label,midway,left=4pt] {$e_{WAB}$};
  \draw[farey edge] (rr-open) -- (rr-end)
    node[edge label,midway,right=4pt] {$e_{WA^{2}}$};

  \node[open vertex]   at (top) {};
  \node[filled vertex] at (root) {};

  \node[open vertex] at (left-open) {};
  \node[open vertex] at (right-open) {};
  \node[filled vertex] at (left-filled) {};
  \node[filled vertex] at (right-filled) {};

  \node[open vertex] at (ll-open) {};
  \node[open vertex] at (lr-open) {};
  \node[open vertex] at (rl-open) {};
  \node[open vertex] at (rr-open) {};

  \node[filled vertex] at (ll-end) {};
  \node[filled vertex] at (lr-end) {};
  \node[filled vertex] at (rl-end) {};
  \node[filled vertex] at (rr-end) {};

  \node[font=\large] at ($(ll-end)+(0,-0.65)$) {$\vdots$};
  \node[font=\large] at ($(lr-end)+(0,-0.65)$) {$\vdots$};
  \node[font=\large] at ($(rl-end)+(0,-0.65)$) {$\vdots$};
  \node[font=\large] at ($(rr-end)+(0,-0.65)$) {$\vdots$};
\end{tikzpicture}
  \caption{The local picture of $\F$ near a flipped edge $e_W$ with $W \neq I$.}
  \label{fig:flip}  
\end{figure}

At the base edge the two sides of $e_I$ must also be permuted. Therefore, the \emph{flip at $I$} needs to be defined separately, as follows:
\begin{equation}\label{eq:base-flip}
 \phi_I(X)=
 \begin{cases}
  SL^2Y,&X=LY,\\
  LY,&X=L^2Y,\\
  L^2Y,&X=SLY,\\
  SLY,&X=SL^2Y.
 \end{cases}
\end{equation}

These boundary flips should not be confused with involutive diagonal
exchanges in an ideal quadrilateral.  If $W$ is admissible, then
\[
 \phi_{WL}^{\,n}\bigl(WBA^\infty\bigr)=WB^{n+1}A^\infty,
 \qquad n\geq0.
\]
The ends on the right are pairwise distinct, so $\phi_{WL}$ and its inverse
$\phi_{WL^2}$ have infinite order.  Only the distinguished flip is
exceptional: $\phi_I^4=\id$.

\begin{proposition}\label{prop:flip-descends}
Every flip descends to an orientation-preserving homeomorphism of $\partial_I^{\#}\F$.  
\end{proposition}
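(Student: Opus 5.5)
Each flip $\phi_{WL}$ and $\phi_I$ is a prefix replacement on the Cantor space $\partial_I\F$. I would first show that it is a homeomorphism of $\partial_I\F$. I would then show that it respects the rational identification $\sim$ of \cref{def:smashed}, and finally check orientation via cyclic order.

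\textbf{Homeomorphism of the Cantor boundary.} For $\phi_{WL}$, the cylinders $\cyl{WB}$, $\cyl{WAB}$, $\cyl{WAA}$ partition $\cyl{W}$, and so do $\cyl{WBB}$, $\cyl{WBA}$, $\cyl{WA}$. The formula maps each piece of the first partition onto the corresponding piece of the second by a prefix swap and is the identity off $\cyl{W}$. It is therefore a continuous bijection of a compact Hausdorff space, hence a homeomorphism, with inverse $\phi_{WL^2}$. The same reasoning applies to $\phi_I$ with the four cylinders $\cyl{L},\cyl{L^2},\cyl{SL},\cyl{SL^2}$. Here I read $L$ and $L^2$ as the two turns following the initial side, consistently with $A=LS$, $B=L^2S$.

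\textbf{Descent to the smashed boundary.} I must show that $\varepsilon\sim\eta$ implies $\phi(\varepsilon)\sim\phi(\eta)$. Since $\phi$ is a bijection, applying the same argument to $\phi^{-1}$ (also a flip) then gives a bijection of the quotient. Continuity of the induced map is automatic from the universal property of the quotient. Bijectivity, together with compactness of $\partial_I^\#\F$ and the Hausdorff property of $\PP^1(\R)$ via \eqref{eq:ev-bar-homeo}, makes it a homeomorphism.

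A rational fibre consists of two ends $XA B^\infty$ and $XBA^\infty$: the left- and right-eventually-constant ends through a common vertex. The other configuration is the pair $A^\infty$, $B^\infty$ emanating from a common prefix, possibly across the base via $S^\delta$, which represents $\infty$ or $0$ at the boundary of a cylinder. I would list the pairs whose elements lie in different pieces of the partition, since pairs inside one piece are preserved because prefix swaps commute with suffixes. For $\phi_{WL}$ these are the interior break points together with the outer endpoints $WA^\infty$ and $WB^\infty$ of $\cyl{W}$.

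\begin{itemize}
\item At the interior breaks: $WBA^\infty\mapsto WBBA^\infty$ paired with $WAB^\infty\mapsto WBAB^\infty$, which are again a fibre pair. The pair $WABA^\infty\mapsto WBAA^\infty$ and $WAAB^\infty\mapsto WAB^\infty$ is also again a fibre pair.
\item At the extreme ends: $WA^\infty\mapsto WA^\infty$ and $WB^\infty\mapsto WB^\infty$ are fixed, so their partners outside $\cyl{W}$, which are fixed too, stay matched.
\end{itemize}

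For $\phi_I$ I would do the analogous finite check, where the extreme ends of the four base cylinders pair up across the base edge.

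\textbf{Orientation.} Via $\overline{\ev}$, the cyclic order on $\PP^1(\R)$ corresponds to the lexicographic cyclic order of turn-words, with $B$ before $A$ and $S$-side after the $I$-side. The flip maps the three consecutive arcs $\cyl{WB},\cyl{WAB},\cyl{WAA}$ in order onto the consecutive arcs $\cyl{WBB},\cyl{WBA},\cyl{WA}$, each order-preservingly, and fixes the complement. Hence it preserves cyclic order. A cyclic-order-preserving homeomorphism of the circle is orientation-preserving. For $\phi_I$ the four base arcs are cyclically rotated by one position, which is again order-preserving.

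The main obstacle is the bookkeeping in the descent step: pinning down exactly which ends lie in a common fibre at cylinder boundaries, including the base edge where the $S^\delta$ prefix intervenes, and fixing the left/right convention so that the cyclic-order claim comes out with the correct sign. I would settle this first by computing $\ev$ of the extreme ends of $\cyl{W}$ explicitly as Farey endpoints.
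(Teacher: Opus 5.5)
Your proposal is correct and follows the paper's proof: you show that the flip substitutions preserve the identifications $PAB^\infty\sim PBA^\infty$, $B^\infty\sim SA^\infty$ and $A^\infty\sim SB^\infty$, then get cyclic-order preservation from $(WB,WAB,WAA)\mapsto(WBB,WBA,WA)$ and from the rotation of the four base cylinders, and you also write out the breakpoint checks and the compact--Hausdorff step that the paper leaves implicit. One phrasing fix: the base fibres are exactly $\{B^\infty,SA^\infty\}$ and $\{A^\infty,SB^\infty\}$, while $WA^\infty$ and $WB^\infty$ represent the distinct endpoints $W(\infty)$ and $W(0)$ and are never identified with each other.
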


\begin{proof}
	The nontrivial identifications are
	\[
	PAB^\infty\sim PBA^\infty
	\quad(P\text{ admissible}),
	\qquad
	B^\infty\sim SA^\infty,
	\qquad
	A^\infty\sim SB^\infty.
	\]
	The last two pairs represent $0$ and $\infty$, respectively.
	The defining substitutions of each flip and its inverse
	preserve these identifications, so they induce mutually inverse
	homeomorphisms of the quotient.
	
	Within its supporting cylinder, a left flip preserves
	the order of the prefix lists:
	\[
	(WB,WAB,WAA)\longmapsto(WBB,WBA,WA).
	\]
	The base flip cyclically permutes its four initial
	cylinders. Thus every flip preserves cyclic order
	on the quotient.
\end{proof}

\begin{definition}\label{def:shuffle}
On the set of ends of the Farey tree, the \emph{shuffle at $W$} is defined as the local transposition $\sigma_W$ given by
\[
 \sigma_W(WAY)=WBY,\qquad
 \sigma_W(WBY)=WAY,
\]
and $\sigma_W(X)=X$ for $X\notin\cyl{W}$.
\end{definition}

Every shuffle is an involutive homeomorphism of the ordinary Cantor boundary $\partial_I\F$.
It exchanges the left and right forward subtrees at $W$ and reverses their
local cyclic order.  In contrast to a flip, it need not preserve the paired rational ends.

\paragraph{Projective description.}
For an admissible prefix $P$, put
\[
I(P):=P((0,\infty))\subset\PP^1(\R).
\]
This is the open Farey arc corresponding to $\cyl{P}$,
with endpoints $P(0)$ and $P(\infty)$. Under the evaluation
map, a prefix replacement \(PY\mapsto QY\) is represented on \(I(P)\) by
\begin{equation}\label{eq:prefix-projective}
	x\longmapsto QP^{-1}\cdot x.
\end{equation}
Consequently, every boundary flip is piecewise given by elements of
\(\psl\), with finitely many rational breakpoints, and descends to a
homeomorphism of \(\PP^1(\R)\).  The same formula describes a shuffle on
the interiors of its two supporting intervals, but its one-sided values at
their rational endpoints may differ; hence a shuffle need not descend to
the smashed boundary.

\subsection{Binary coding and the question-mark conjugacy}
\label{sec:pl-circle}

We use the binary convention
\begin{equation}\label{eq:binary-turns}
	B=L^2S\longleftrightarrow0,\qquad
	A=LS\longleftrightarrow1,
\end{equation}
and encode the two initial sides of the distinguished edge $e_I$ by $0$ and $1$.
For an end $\varepsilon=S^\delta C_1C_2\cdots$, define
\begin{equation}\label{eq:farey-binary-code}
	\kappa(\varepsilon)=\delta c_1c_2\cdots,
	\qquad
	c_j=
	\begin{cases}
		0,&C_j=B,\\
		1,&C_j=A.
	\end{cases}
\end{equation}
The \emph{binary code} of an admissible prefix $W=S^\delta C_1\cdots C_k$ is the word $w=\delta c_1\cdots c_k$, so that $\kappa(\cyl{W})=w\mathcal B$. The identification induced by $\kappa$ gives
\begin{equation} 
	L^2\xi\longleftrightarrow00\xi,\qquad
	L\xi\longleftrightarrow01\xi,\qquad
	SL^2\xi\longleftrightarrow10\xi,\qquad
	SL\xi\longleftrightarrow11\xi,
\end{equation}
as displayed in \cref{fig:kappa-coding}.  
\begin{figure}[ht]
 \centering
 \begin{tikzpicture}[
   x=2.35cm,
   y=1.05cm,
   code label/.style={
     fill=white,
     inner sep=1.2pt,
     font=\scriptsize\bfseries
   },
   word label/.style={
     fill=white,
     inner sep=1.2pt,
     font=\scriptsize
   },
   farey geodesic/.style={
     draw=black!42,
     densely dashed,
     line width=0.45pt
   }
 ]
 \draw[black!55,line width=0.45pt] (-2.18,0)--(2.18,0);
 \foreach \x in {-2,-1,0,1,2}
   \draw[farey geodesic] (\x,0)--(\x,2.25);
 \foreach \n in {-2,-1,0,1}{
   \draw[farey geodesic]
     (\n,0)..controls +(0.18,1.15) and +(-0.18,1.15)..(\n+1,0);
   \draw[farey geodesic]
     (\n,0)..controls +(0.10,0.50) and +(-0.10,0.50)..(\n+0.5,0);
   \draw[farey geodesic]
     (\n+0.5,0)..controls +(0.10,0.50) and +(-0.10,0.50)..(\n+1,0);
 }

 \coordinate (bm2) at (-1.5,2.08);
 \coordinate (wm1) at (-1,2.38);
 \coordinate (bm1) at (-0.5,2.08);
 \coordinate (wzero) at (0,2.38);
 \coordinate (bzero) at (0.5,2.08);
 \coordinate (wone) at (1,2.38);
 \coordinate (bone) at (1.5,2.08);
 \coordinate (wtwo) at (2,2.38);
 \coordinate (btwo) at (2.28,2.18);
 \coordinate (dm1) at (-0.5,0.86);
 \coordinate (dzero) at (0.5,0.86);
 \coordinate (done) at (1.5,0.86);
 \coordinate (cm1) at (-0.5,0.57);
 \coordinate (czero) at (0.5,0.57);
 \coordinate (cone) at (1.5,0.57);
 \coordinate (qonezerozero) at (-0.75,0.37);
 \coordinate (qoneoneone) at (-0.25,0.37);
 \coordinate (qzerozerozero) at (0.25,0.37);
 \coordinate (qzerozeroone) at (0.75,0.37);
 \coordinate (ronezerozero) at (-0.90,0.17);
 \coordinate (roneoneone) at (-0.10,0.17);
 \coordinate (rzerozerozero) at (0.10,0.17);
 \coordinate (rzerozeroone) at (0.90,0.17);

 \draw[farey edge] (bm2)--(wm1)--(bm1)--(wzero)--(bzero)--(wone)--(bone)--(wtwo)--(btwo);
 \draw[farey edge] (bm1)--(dm1)--(cm1);
 \draw[farey edge] (bzero)--(dzero)--(czero);
 \draw[farey edge] (bone)--(done)--(cone);
 \draw[farey edge] (cm1)--(qonezerozero)--(ronezerozero);
 \draw[farey edge] (cm1)--(qoneoneone)--(roneoneone);
 \draw[farey edge] (czero)--(qzerozerozero)--(rzerozerozero);
 \draw[farey edge] (czero)--(qzerozeroone)--(rzerozeroone);

 \node[word label,above=1pt] at ($(wzero)!0.50!(bzero)$) {$I$};
\node[code label,below=1pt, font=\normalsize] at ($(wzero)!0.50!(bzero)$) {$0$};
 \node[word label,above=1pt] at ($(wzero)!0.50!(bm1)$) {$S$};
 \node[code label,below=1pt, font=\normalsize] at ($(wzero)!0.50!(bm1)$) {$1$};
 \node[word label,above=2pt] at ($(bm1)!0.50!(wm1)$) {$SL^2$};
 \node[word label,left=2pt] at ($(bm1)!0.43!(dm1)$) {$SL$};
 \node[word label,left=2pt] at ($(dm1)!0.50!(cm1)$) {$SLS$};
 \node[word label,right=2pt] at ($(bzero)!0.43!(dzero)$) {$L^2$};
 \node[word label,right=2pt] at ($(dzero)!0.50!(czero)$) {$L^2S$};
 \node[word label,above=2pt] at ($(bzero)!0.50!(wone)$) {$L$};

 \node[code label,below=3pt, font=\normalsize] at (wm1) {$10$};
 \node[code label,right=4pt, font=\normalsize] at (dm1) {$11$};
 \node[code label,left=4pt, font=\normalsize] at (dzero) {$00$};
 \node[code label,below=3pt, font=\normalsize] at (wone) {$01$};

 \node[word label,above=2pt] at ($(wm1)!0.50!(bm2)$) {$SL^2S$};
 \node[word label,above=2pt] at ($(wone)!0.50!(bone)$) {$LS$};
 \node[word label,above=2pt] at ($(bone)!0.50!(wtwo)$) {$LSL$};
 \node[word label,above=2pt] at ($(wtwo)!0.50!(btwo)$) {$LSLS$};
 \node[word label,left=2pt] at ($(bone)!0.43!(done)$) {$LSL^2$};
 \node[word label,left=2pt] at ($(done)!0.50!(cone)$) {$LSL^2S$};
 \node[code label,below=3pt, font=\normalsize] at (wtwo) {$011$};
 \node[code label,right=4pt,font=\normalsize] at (done) {$010$};
 \node[code label,right=1pt, below=4pt, font=\footnotesize] at (qonezerozero) {$110$};
 \node[code label,left=1pt, below=3pt, font=\footnotesize] at (qoneoneone) {$111$};
 \node[code label,right=3pt, below=3pt, font=\footnotesize] at (qzerozerozero) {$000$};
 \node[code label,left=3pt, below=3pt, font=\footnotesize] at (qzerozeroone) {$001$};

 \foreach \p in {wm1,wzero,wone,wtwo,dm1,dzero,done,
                   qonezerozero,qoneoneone,qzerozerozero,qzerozeroone}
   \node[open vertex] at (\p) {};
 \foreach \p in {bm2,bm1,bzero,bone,btwo,cm1,czero,cone,
                   ronezerozero,roneoneone,rzerozerozero,rzerozeroone}
   \node[filled vertex] at (\p) {};

 \foreach \x/\lab in {
   -2/{-2},-1.5/{-\frac32},-1/{-1},-0.5/{-\frac12},
   0/{0},0.5/{\frac12},1/{1},1.5/{\frac32},2/{2}}
   \node[below=3pt,font=\scriptsize] at (\x,0) {$\lab$};
 \end{tikzpicture}
 \caption{The binary coding near $e_I$.  The labels $0$ and $1$ belong to the initial edges $I$ and
 $S$. The remaining binary labels are attached to edge pairs $(W,WS)$, where $W$ is a word ending with $L$ or $L^{2}$.}
 \label{fig:kappa-coding}
\end{figure}

The map $\kappa$ is a homeomorphism. Under this coding,
$\cyl{I}$ and $\cyl{S}$ correspond to $0\mathcal B$ and
$1\mathcal B$, respectively. Finite partitions into at least
two Farey cylinders correspond exactly to finite maximal
binary prefix codes with at least two elements.
The singleton code consisting of the empty binary word
corresponds to the trivial partition $\{\partial_I\F\}$.
More generally, if admissible Farey prefixes $U,V$ have
binary codes $u,v$, then conjugation by $\kappa$ sends the prefix
replacement $UY\mapsto VY$ to
\begin{equation}\label{eq:farey-prefix-conjugacy}
 u\xi\longmapsto v\xi.
\end{equation}

Let $\sim_{\mathrm{bin}}$ be the equivalence relation on $\mathcal B$
generated by
\[
 w0\,1^\infty\sim_{\mathrm{bin}}w1\,0^\infty
 \quad(w\in\{0,1\}^{*}),
 \qquad
 0^\infty\sim_{\mathrm{bin}}1^\infty,
\]
and put $\mathcal B_{\mathrm{circ}}:=\mathcal B/\!\sim_{\mathrm{bin}}$.
This is the binary circle.  Under $\kappa$, the pairs in
$\sim_{\mathrm{bin}}$ are exactly the pairs identified in the smashed
boundary.  Hence $\kappa$ descends to a homeomorphism
\begin{equation}\label{eq:kappa-circle-homeo}
 \bar\kappa:\partial_I^\#\F
 \xrightarrow{\ \cong\ }\mathcal B_{\mathrm{circ}}.
\end{equation}

Under $\kappa$, the linear-order, cyclic-order, and arbitrary prefix
replacements are precisely the actions of $\mathsf F$, $\mathsf T$, and
$\mathsf V$ defined in Section~\ref{sec:thompson-prefix}.  Cyclic-order-preserving prefix replacements preserve $\sim_{\mathrm{bin}}$, so
$\mathsf T$ acts on $\mathcal B_{\mathrm{circ}}$.  Flips descend to this
circle quotient; those fixing the point represented by $0^\infty$ and $1^\infty$ lie in $\mathsf F$.

\begin{remark}[Question-mark conjugacy] In fact, the composition
 \[
\bar\kappa\circ\overline{\ev}^{-1}:  \PP^1(\R)\longrightarrow\mathcal B_{\mathrm{circ}}
 \]
 is the circular Minkowski question-mark function; it sends rational points
 to dyadic points.
\end{remark}

\subsection{Thompson group identifications and modular generators}

We use the following standard prefix generators, which may be found in \cite[\S\S4--5]{CannonFloydParry1996} and \cite[p.~1112]{CannonFloyd2011}.
\begin{align*}
	 X_0 : & \qquad
	0\xi\mapsto00\xi,\qquad
	10\xi\mapsto01\xi,\qquad
	11\xi\mapsto1\xi, \\
	X_1: & \qquad
	0\xi\mapsto0\xi,\qquad
	10\xi\mapsto100\xi,\qquad
	110\xi\mapsto101\xi,\qquad
	111\xi\mapsto11\xi,\\
	 Y: & \qquad 0\xi\mapsto1\xi,\qquad1\xi\mapsto0\xi.
\end{align*}
Then
\begin{equation}\label{eq:FT-generators}
 \mathsf F=\langle X_0,X_1\rangle,\qquad
 \mathsf T=\langle X_0,X_1,Y\rangle,
\end{equation}
as shown in \cite[\S3--5]{CannonFloydParry1996}.  

\begin{theorem}\label{thm:flip-conjugacy}
Let
\[
	G_{\mathrm{flip}} :=  \left\langle \phi_I,\phi_{WL},\phi_{WL^2} : W\text{ admissible}  \right\rangle.
\]
The group $G_{\mathrm{flip}}$ is isomorphic to Thompson's group $\mathsf T$ in its prefix-code realization. More precisely, 
\begin{equation}\label{eq:flip-generators}
 X_0=\phi_I\phi_{SL^2}\phi_I^3\phi_{SL^2}\phi_I^3,
 \qquad X_1=\phi_{SL},\qquad Y=\phi_I^2.
\end{equation}
\end{theorem}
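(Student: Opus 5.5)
Throughout, I would work in the binary model, transporting everything through the homeomorphism $\kappa$ of \eqref{eq:farey-binary-code} and using the prefix-conjugacy rule \eqref{eq:farey-prefix-conjugacy}. The isomorphism in the statement is then conjugation by $\kappa$: $g\mapsto \kappa g\kappa^{-1}$. The proof needs two inclusions,
\[
\kappa G_{\mathrm{flip}}\kappa^{-1}\subseteq\mathsf T
\qquad\text{and}\qquad
\mathsf T\subseteq\kappa G_{\mathrm{flip}}\kappa^{-1}.
\]
The second inclusion follows from \eqref{eq:FT-generators} once the three identities \eqref{eq:flip-generators} are verified.

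\emph{Step 1: each flip lies in $\mathsf T$.} Let $W$ be admissible with binary code $w$. Applying \eqref{eq:binary-turns} to \cref{def:left-flip}, the flip $\phi_{WL}$ becomes
\[
w0\xi\mapsto w00\xi,\qquad w10\xi\mapsto w01\xi,\qquad w11\xi\mapsto w1\xi,
\]
and it is the identity off $w\mathcal B$. This is the standard $X_0$ supported in the cylinder $w\mathcal B$. Completing $\{w0,w10,w11\}$ to a finite maximal prefix code by the siblings along the path to $w$ gives a table whose permutation is the identity, so $\kappa\phi_{WL}\kappa^{-1}\in\mathsf F\subset\mathsf T$. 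Since $\phi_{WL^2}=\phi_{WL}^{-1}$, it lies in $\mathsf F$ as well. For the base flip, \eqref{eq:base-flip} reads
\[
00\mapsto01,\qquad 01\mapsto10,\qquad 10\mapsto11,\qquad 11\mapsto00.
\]
This is the cyclic shift by one on the ordered code $\{00<01<10<11\}$, so it lies in $\mathsf T$. This step also agrees with \cref{prop:flip-descends}.

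\emph{Step 2: the identities for $Y$ and $X_1$.} Squaring the table of $\phi_I$ gives $00\mapsto10$, $01\mapsto11$, $10\mapsto00$, $11\mapsto01$. After a reduction this is $0\xi\mapsto1\xi$, $1\xi\mapsto0\xi$, which is $Y$. For $\phi_{SL}$ take $W=S$, so $w=1$. The Step 1 formula then gives
\[
10\xi\mapsto100\xi,\qquad 110\xi\mapsto101\xi,\qquad 111\xi\mapsto11\xi,
\]
together with $0\xi\mapsto0\xi$. This is exactly $X_1$.

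\emph{Step 3: the identity for $X_0$ (the main obstacle).} This is the only genuine computation, and it depends on the composition convention. I would fix the convention already implicit in \eqref{eq:prefix-projective}, namely functions acting on the left with the rightmost factor applied first, and check it against the product. The map $\phi_{SL^2}$ is the $X_0$-type map supported on $1\mathcal B$:
\[
10\mapsto100,\qquad 110\mapsto101,\qquad 111\mapsto11.
\]
Conjugating by the quarter rotation $\phi_I^3=\phi_I^{-1}$ moves its support to another half-circle. I would compute $\phi_I\phi_{SL^2}\phi_I^{3}$ by passing to a common refinement of depth three, obtaining a table on a code such as $\{00,01,100,101,11\}$. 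I would then compose with $\phi_{SL^2}\phi_I^3$ in the same way, reduce, and compare with $0\mapsto00$, $10\mapsto01$, $11\mapsto1$. The intended mechanism is that one copy of $X_0$ supported in $1\mathcal B$, composed with a rotated copy and a net rotation of one half-turn, reassembles $X_0$ on the whole space. If the bookkeeping comes out with the inverse convention, I would instead read the product left to right; the identity is convention-sensitive and must be checked on all refined cylinders.

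With both inclusions established, conjugation by $\kappa$ is an injective homomorphism whose image is exactly $\mathsf T$. This proves the theorem.
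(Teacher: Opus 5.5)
Your plan is the paper's own route: pass to binary coordinates, show every flip has an order-preserving or cyclic-shift table so that $G_{\mathrm{flip}}\le\mathsf T$, and get the reverse inclusion from $\mathsf T=\langle X_0,X_1,Y\rangle$ via the three identities. Steps 1 and 2 are correct.

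\textbf{The gap is Step 3.} The $X_0$ identity is the only substantive content of the ``more precisely'' clause, and you leave it as a plan. As set up, that plan would fail.
\begin{itemize}
\item The table you write for $\phi_{SL^2}$, namely $10\mapsto100$, $110\mapsto101$, $111\mapsto11$, is the table of $\phi_{SL}=X_1$ that you derived in Step 2. Since $\phi_{SL^2}=\phi_{SL}^{-1}$, the correct table fixes $0\mathcal B$ and sends $100\xi\mapsto10\xi$, $101\xi\mapsto110\xi$, $11\xi\mapsto111\xi$.
\item With your table, the right-to-left product $\phi_I\phi_{SL}\phi_I^{3}\phi_{SL}\phi_I^{3}$ sends $000\zeta\mapsto101\zeta$, whereas $X_0(000\zeta)=0000\zeta$.
\item Your fallback of reading the word left to right is not available. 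The statement fixes right-to-left composition. Moreover, the reversed product sends $00\eta\mapsto11\eta$ with either table, because both tables fix $0\mathcal B$.
\item Your heuristic is also off. Since $\phi_I^3=\phi_I^{-1}$, the identity reads $X_0=(\phi_I\phi_{SL^2}\phi_I^{-1})\,\phi_{SL^2}\,\phi_I^{-1}$. So the net rotation is $\phi_I^{7}=\phi_I^{-1}$, a quarter step, not a half-turn. The pieces are $\phi_{SL^2}$ and a conjugate of it, not copies of $X_0$.
\end{itemize}

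\textbf{How to close it.} Write $r=\phi_I$ and $f=\phi_{SL^2}$, using the correct table. Track the five cylinders of $\{00,01,10,110,111\}$ through $r^{-1},f,r^{-1},f,r$, applied in that order:
\[
\begin{aligned}
00\eta&\mapsto11\eta\mapsto111\eta\mapsto101\eta\mapsto110\eta\mapsto000\eta,\\
01\eta&\mapsto00\eta\mapsto00\eta\mapsto11\eta\mapsto111\eta\mapsto001\eta,\\
10\eta&\mapsto01\eta\mapsto01\eta\mapsto00\eta\mapsto00\eta\mapsto01\eta,\\
110\eta&\mapsto100\eta\mapsto10\eta\mapsto01\eta\mapsto01\eta\mapsto10\eta,\\
111\eta&\mapsto101\eta\mapsto110\eta\mapsto100\eta\mapsto10\eta\mapsto11\eta.
\end{aligned}
\]
Merging sibling rows gives $0\xi\mapsto00\xi$, $10\xi\mapsto01\xi$ and $11\xi\mapsto1\xi$, which is exactly $X_0$. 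This is the composition the paper performs with the right-flip table, read from right to left. With it, $X_0\in G_{\mathrm{flip}}$, and your argument for $\mathsf T\subseteq G_{\mathrm{flip}}$ is complete.
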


\begin{proof}
	The flip $\phi_I$ acts by
	\[
	 00 \xi \mapsto01\xi\mapsto 10\xi\mapsto 11\xi \mapsto00\xi.
	\]
	Taking $W=I$ in \cref{def:left-flip} gives
	\begin{equation}\label{eq:phi-L-binary}
		\phi_L:
		\quad
		00\xi\mapsto000\xi,\qquad
		010\xi\mapsto001\xi,\qquad
		011\xi\mapsto01\xi,
		\qquad 1\xi\mapsto1\xi.
	\end{equation}
	Thus $\phi_L$ is supported on $0\mathcal B$.  Taking $W=S$ gives
	\begin{equation}\label{eq:phi-SL-binary}
		\phi_{SL}:
		\quad
		0\xi\mapsto0\xi,\qquad
		10\xi\mapsto100\xi,\qquad
		110\xi\mapsto101\xi,\qquad
		111\xi\mapsto11\xi.
	\end{equation}
	Its inverse is the right flip
	\begin{equation}\label{eq:phi-SL2-binary}
		\phi_{SL^2}:
		\quad
		0\xi\mapsto0\xi,\qquad
		100\xi\mapsto10\xi,\qquad
		101\xi\mapsto110\xi,\qquad
		11\xi\mapsto111\xi.
	\end{equation}
	More generally, every flip has a cyclic-order-preserving prefix table, so we deduce $G_{\mathrm{flip}}\leq\mathsf T$.  For the reverse
	inclusion, we calculate the standard generators in flip coordinates. Indeed, $X_1=\phi_{SL}$, $\phi_I^2$ exchanges $0\xi$ and $1\xi$, and hence
	$Y=\phi_I^2$.  Finally, composing this four-cycle with the right-flip table \eqref{eq:phi-SL2-binary}, from right to left, gives
	\[
		 \phi_I\phi_{SL^2}\phi_I^3\phi_{SL^2}\phi_I^3:
		 \quad
		 0\xi\mapsto00\xi,\qquad
		 10\xi\mapsto01\xi,\qquad
		 11\xi\mapsto1\xi.
	\]
	This is the defining table of $X_0$.  Thus the generators in \eqref{eq:FT-generators} belong to $G_{\mathrm{flip}}$, proving the reverse
	inclusion. 
\end{proof}


\paragraph{Standing convention.}
Using \cref{thm:flip-conjugacy} and
\eqref{eq:kappa-circle-homeo}, we identify $\mathsf T$
with $G_{\mathrm{flip}}$ and write $t$ also for its
transported action $\bar\kappa^{-1}t\bar\kappa$
on $\partial_I^\#\F$. Thus $t(x)$, and later $t(\mathscr E)$ and $t(\vec e)$ for the patterns and marked edges of \cref{sec:effective-decompositions}, refer to this boundary action.


\begin{example}
	The modular generator $S$ exchanges the initial binary
	cylinders, so $S=\phi_I^2$. The generator $L$ has table
	\begin{equation}\label{eq:L-binary-table}
		L:\quad
		10\xi\mapsto010\xi,\qquad
		11\xi\mapsto011\xi,\qquad
		00\xi\mapsto1\xi,\qquad
		01\xi\mapsto00\xi.
	\end{equation}
	Direct composition using \eqref{eq:base-flip} and
	\eqref{eq:phi-SL-binary} gives
	\begin{equation}\label{eq:modular-generators-as-flips}
		\begin{aligned}
			S&=\phi_I^2,\\
			L&=\phi_I^3\phi_{SL}\phi_I\phi_{SL}\phi_I^3,\\
			L^2&=\phi_I\phi_{SL^2}\phi_I^3\phi_{SL^2}\phi_I.
		\end{aligned}
	\end{equation}
\end{example}

Define the flip--shuffle group on the ordinary boundary by
\[
 G_{\mathrm{flip,shuffle}}
 :=\left\langle
     G_{\mathrm{flip}},\ \sigma_W:
     W\text{ admissible}
   \right\rangle.
\]
%
%

\begin{corollary}[Flip--shuffle realization]
	\label{thm:flip-shuffle-V}
	\begin{equation}\label{eq:V-generators}
		G_{\mathrm{flip,shuffle}}\cong\mathsf V.
	\end{equation}
\end{corollary}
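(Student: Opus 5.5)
The plan is to prove two inclusions of groups of homeomorphisms of $\partial_I\F$, identified with $\mathcal B$ through $\kappa$, and then invoke a standard generating set for $\mathsf V$.

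For $G_{\mathrm{flip,shuffle}}\le\mathsf V$, we check the generators.
\begin{itemize}
\item By \cref{thm:flip-conjugacy} the flips generate $\mathsf T\le\mathsf V$.
\item Each shuffle $\sigma_W$ is a prefix replacement. If $W$ has binary code $w$, then \eqref{eq:farey-prefix-conjugacy} shows that $\kappa\sigma_W\kappa^{-1}$ is given by $w0\xi\mapsto w1\xi$ and $w1\xi\mapsto w0\xi$, and fixes every point outside $w\mathcal B$.
\item Completing $\{w0,w1\}$ to a finite maximal prefix code $P$, for instance by adding the siblings of all proper prefixes of $w$, gives a table $(P,P,\tau)$ with $\tau$ a transposition. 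Hence $\sigma_W\in\mathsf V$.
\end{itemize}
Since $\mathsf V$ is a group, the inclusion follows.

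For the reverse inclusion we show that every $h_{P,Q,\tau}\in\mathsf V$ lies in $G_{\mathrm{flip,shuffle}}$. Choose any cyclic shift $\tau_0$, say the identity. The element $t:=h_{P,Q,\mathrm{id}}$ lies in $\mathsf F\le\mathsf T=G_{\mathrm{flip}}$. Then
\[
h_{P,Q,\tau}=h_{Q,Q,\tau}\circ t,
\]
where $h_{Q,Q,\tau}$ is a pure permutation of the cylinders $q_i\mathcal B$. This needs a small reindexing check: $h_{Q,Q,\tau}$ should send $q_i\xi\mapsto q_{\tau(i)}\xi$, and composing gives $p_i\xi\mapsto q_i\xi\mapsto q_{\tau(i)}\xi$, as required. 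Since $\operatorname{Sym}(n)$ is generated by adjacent transpositions, it suffices to realize the swap of $q_i\mathcal B$ and $q_{i+1}\mathcal B$, for consecutive entries of a finite maximal prefix code, inside $G_{\mathrm{flip,shuffle}}$.

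The key step, and the main obstacle, is realizing these cylinder swaps. Take any two distinct cylinders $u\mathcal B$ and $v\mathcal B$ of the code with $n\ge2$. The plan is to find an element $g\in\mathsf F$ that maps $u\xi\mapsto w0\xi$ and $v\xi\mapsto w1\xi$ for some word $w$.
\begin{itemize}
\item Such a $g$ exists because $\mathsf F$ acts transitively on ordered pairs of prefix-disjoint cylinders listed in lexicographic order. Concretely, choose a finite maximal code $R$ with the same number of elements as a code containing $u,v$, in which $w0,w1$ occupy the positions of $u,v$. This is possible when $u<v$ and the numbers of code elements before, between and after them can be matched by suitable expansions. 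The standard way is to expand both codes until the counts agree.
\item For adjacent $u,v$ nothing lies between them, so we take $w0,w1$ to be siblings. A code with $w0,w1$ adjacent and arbitrary counts $a\ge0$ before and $b\ge0$ after exists whenever $a+b+2\ge2$. One builds it by splitting a caret chain; the case $a=b=0$ uses $w$ empty.
\end{itemize}
Then $g^{-1}\sigma_{W}g$ swaps $u\xi\leftrightarrow v\xi$ and fixes everything else, where $W$ is the admissible prefix with code $w$. Here $w$ empty corresponds to the whole boundary, so that case needs the separate swap of $0\mathcal B$ and $1\mathcal B$, which is $\phi_I^2=Y$. Since $g\in\mathsf F\le G_{\mathrm{flip}}$, this conjugate lies in $G_{\mathrm{flip,shuffle}}$.

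I expect the only delicate bookkeeping to be the counting argument for the code $R$ and the edge case where $w$ is empty, since shuffles are only defined at admissible prefixes. The second point is harmless, because $w$ can be taken nonempty whenever $n\ge3$, and for $n=2$ the swap is $Y$. Alternatively, and more cheaply, one can simply cite the standard fact that $\mathsf V$ is generated by $\mathsf T$ together with one transposition of two cylinders, for example $\sigma_S$ ($10\xi\leftrightarrow11\xi$), as in \cite{CannonFloydParry1996,BleakQuick2017}. In that case only the first inclusion and the observation $\sigma_S\in G_{\mathrm{flip,shuffle}}$ are needed.
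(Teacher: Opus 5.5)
Your proof is correct, and your first inclusion is the same as the paper's. For the reverse inclusion, the paper's proof is exactly your closing ``cheaper'' alternative. It cites \cite{CannonFloydParry1996} for $\mathsf V=\langle\mathsf T,Z\rangle$, where $Z$ exchanges $10\mathcal B$ and $11\mathcal B$, observes that $Z=\sigma_S$, and then invokes \cref{thm:flip-conjugacy}.

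Your main route instead proves this generation fact directly. The factorization $h_{P,Q,\tau}=h_{Q,Q,\tau}\circ h_{P,Q,\mathrm{id}}$, together with the homomorphism $\tau\mapsto h_{Q,Q,\tau}$, reduces everything to transpositions of consecutive cylinders. You realize each of these as an $\mathsf F$-conjugate $g^{-1}\sigma_Wg$ with $g=h_{Q,R,\mathrm{id}}$. An explicit $R$ comes from taking $w=1^a0^b$ and completing $\{w0,w1\}$ by the siblings of the nonempty prefixes of $w$. This places exactly $a$ words before $w0$ and $b$ words after $w1$, with $w$ nonempty once $n\geq3$; the case $n=2$ is covered by $Y=\phi_I^2$.

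This is essentially the non-algorithmic counterpart of the paper's own \cref{prop:V-effective-algorithm}. There $v=a_Q\pi_\tau a_P^{-1}$, and adjacent transpositions are obtained by conjugating $z_n$ by cyclic shifts rather than by elements of $\mathsf F$. Your version is self-contained and needs only elementary prefix-code manipulations, but it uses the whole family $\sigma_W$. The paper's citation gives at once that $\sigma_S$ alone together with flips suffices, which is the form used later for $\langle\phi_I,\phi_{SL^2},\sigma_S\rangle$. You could recover this, since each $\sigma_W$ is conjugate to $\sigma_S$ by an element of $\mathsf T$.

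Two harmless slips. First, completing $\{w0,w1\}$ requires the siblings of all \emph{nonempty} prefixes of $w$, including $w$ itself, rather than of its proper prefixes. Second, $\mathsf F$ is not transitive on arbitrary ordered pairs of disjoint cylinders, because it preserves whether the gap between them is empty. You only use the adjacent case, where your construction is valid.
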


\begin{proof}
	Under $\kappa$, every flip and shuffle is a finite
	prefix replacement, giving the inclusion in $\mathsf V$.
	Conversely, $\mathsf V=\langle\mathsf T,Z\rangle$,
	where $Z$ exchanges $10\mathcal B$ and $11\mathcal B$
	by prefix replacement and fixes $0\mathcal B$;
	see \cite[\S\S4--6]{CannonFloydParry1996}.
	By \cref{def:shuffle}, $Z=\sigma_S$, while
	\cref{thm:flip-conjugacy} identifies $\mathsf T$
	with $G_{\mathrm{flip}}$. This proves the reverse
	inclusion.
\end{proof}

A further corollary of \cref{thm:flip-conjugacy} is that the group generated
by $\phi_W$ for prefixes $W$ starting with $L^2S$ is isomorphic to
Thompson's group $\mathsf F$.

\subsection{Relations among flips and shuffles}
\label{sec:flip-shuffle-relations}

We now collect the local relations among flips and shuffles before reducing
them to a smaller generating set.  In what follows, $U$ and $V$ are admissible
prefixes, $R$ is an arbitrary finite word in $\{A,B\}$, and products are
composed from right to left.  Parentheses in a subscript separate a supporting
prefix from the final $L$ or $L^2$ specifying a left or right flip.

\begin{proposition}[Local relation families]
	\label{prop:flip-shuffle-relations}
	The following identities hold among the boundary flips and shuffles.
	
	\smallskip
	\noindent
	\emph{Flip--flip relations.}  A right flip is the inverse of the
	corresponding left flip:
	\begin{equation}\label{eq:flip-inverse}
		\phi_{UL^2}=\phi_{UL}^{-1}.
	\end{equation}
	If $\cyl{U}\cap\cyl{V}=\varnothing$, then
	\begin{equation}\label{eq:flip-disjoint-commutation}
		[\phi_{UL},\phi_{VL}]=1.
	\end{equation}
	For nested cylinders one has
	\begin{align}
		\phi_{UL}\phi_{(UBR)L}\phi_{UL^2}
		&=\phi_{(UBBR)L}, \label{eq:flip-transport-B}\\
		\phi_{UL}\phi_{(UABR)L}\phi_{UL^2}
		&=\phi_{(UBAR)L}, \label{eq:flip-transport-AB}\\
		\phi_{UL}\phi_{(UAAR)L}\phi_{UL^2}
		&=\phi_{(UAR)L}. \label{eq:flip-transport-AA}
	\end{align}
	Finally, the pentagon, or splitting, relation is
	\begin{equation}\label{eq:flip-pentagon}
		\phi_{UL}^{\,2}
		=\phi_{(UB)L}\phi_{UL}\phi_{(UA)L}.
	\end{equation}
	
	\smallskip
	\noindent
	\emph{Shuffle--shuffle relations.}  Every shuffle is an involution:
	\begin{equation}\label{eq:shuffle-involution}
		\sigma_U^2=1.
	\end{equation}
	If $\cyl{U}\cap\cyl{V}=\varnothing$, then
	\begin{equation}\label{eq:shuffle-disjoint-commutation}
		[\sigma_U,\sigma_V]=1,
	\end{equation}
	whereas for nested cylinders one has
	\begin{equation}\label{eq:shuffle-transport}
		\sigma_U\sigma_{UBR}\sigma_U=\sigma_{UAR}.
	\end{equation}
	
	\smallskip
	\noindent
	\emph{Mixed relations.}  If $\cyl{U}\cap\cyl{V}=\varnothing$, then
	\begin{equation}\label{eq:mixed-disjoint-commutation}
		[\phi_{UL},\sigma_V]=1.
	\end{equation}
	Conjugation of a shuffle by a flip gives
	\begin{align}
		\phi_{UL}\sigma_{UBR}\phi_{UL^2}
		&=\sigma_{UBBR}, \label{eq:mixed-flip-transport-B}\\
		\phi_{UL}\sigma_{UABR}\phi_{UL^2}
		&=\sigma_{UBAR}, \label{eq:mixed-flip-transport-AB}\\
		\phi_{UL}\sigma_{UAAR}\phi_{UL^2}
		&=\sigma_{UAR}. \label{eq:mixed-flip-transport-AA}
	\end{align}
	Conversely, conjugation by a shuffle interchanges the two descendant
	cylinders:
	\begin{equation}\label{eq:mixed-shuffle-transport}
		\sigma_U\phi_{(UBR)L}\sigma_U=\phi_{(UAR)L}.
	\end{equation}
	The remaining same-support interaction is the hexagon relation
	\begin{equation}\label{eq:flip-shuffle-hexagon}
		\phi_{UL}\sigma_U\phi_{UL}
		=\sigma_{UB}\phi_{UL}\sigma_{UA}.
	\end{equation}
\end{proposition}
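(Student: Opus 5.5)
The plan is to verify every identity after conjugating by the coding $\kappa$ of \eqref{eq:farey-binary-code}. By \eqref{eq:farey-prefix-conjugacy}, every flip and shuffle then becomes an explicit finite prefix replacement on $\mathcal B$. Writing $u$ for the binary code of $U$ and using $B\leftrightarrow0$, $A\leftrightarrow1$ from \eqref{eq:binary-turns}, \cref{def:left-flip} and \cref{def:shuffle} read
\[
\phi_{UL}:\ u0\xi\mapsto u00\xi,\quad u10\xi\mapsto u01\xi,\quad u11\xi\mapsto u1\xi;
\qquad
\sigma_U:\ u0\xi\leftrightarrow u1\xi .
\]
Both maps are the identity off $u\mathcal B$. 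Since $\kappa$ is a bijection, it suffices to check each relation on $\mathcal B$ (equivalently, on $\partial_I\F$); no passage to the smashed boundary is needed. The base flip $\phi_I$ does not occur in the statement, so $U$ ranges over admissible prefixes, including $U=I$, for which \cref{def:left-flip} already applies.

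First, \eqref{eq:flip-inverse} and \eqref{eq:shuffle-involution} are immediate from the definitions: $\phi_{UL^2}$ was defined to be the inverse table, and $\sigma_U$ swaps two cylinders.

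Second, the three disjoint-commutation relations \eqref{eq:flip-disjoint-commutation}, \eqref{eq:shuffle-disjoint-commutation} and \eqref{eq:mixed-disjoint-commutation} hold because each operator preserves its supporting cylinder setwise and is the identity outside it. Two maps with disjoint invariant supports commute.

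Third, all transport relations follow from one conjugation principle, which I would state and prove as a small lemma. Let $g$ be a homeomorphism with $g(PY)=QY$ for every suffix $Y$ of $P$. Let $h$ be supported in $\cyl{P}$ with $h(PZ)=PZ'$. Then $ghg^{-1}$ is supported in $\cyl{Q}$ and acts by $QZ\mapsto QZ'$. With products composed from right to left, this is a one-line check on $\cyl{Q}$ and its complement.

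Now apply the lemma. The flip $\phi_{UL}$ restricts to the prefix replacements $UB\mapsto UBB$, $UAB\mapsto UBA$ and $UAA\mapsto UA$. Taking $h$ to be $\phi_{(UBR)L}$, $\phi_{(UABR)L}$ or $\phi_{(UAAR)L}$ gives \eqref{eq:flip-transport-B}, \eqref{eq:flip-transport-AB} and \eqref{eq:flip-transport-AA}, because a flip supported at $PR$ is the prefix replacement $PRZ\mapsto PRZ'$ for the fixed local rule. The same three replacements with $h$ a shuffle give \eqref{eq:mixed-flip-transport-B}, \eqref{eq:mixed-flip-transport-AB} and \eqref{eq:mixed-flip-transport-AA}. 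The replacements $UB\leftrightarrow UA$ given by $\sigma_U$ yield \eqref{eq:shuffle-transport} and \eqref{eq:mixed-shuffle-transport}.

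Finally, the pentagon \eqref{eq:flip-pentagon} and the hexagon \eqref{eq:flip-shuffle-hexagon} are genuine same-support identities, and I would check them by direct table composition. Everything is supported in $u\mathcal B$, so I can set $u=\varnothing$.

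For the pentagon, I refine to the common domain code
\[
\{000,\ 001,\ 01,\ 10,\ 110,\ 111\}
\]
(or whatever refinement the composition forces). I then push each row through the right-hand side $\phi_{(UA)L}$, then $\phi_{UL}$, then $\phi_{(UB)L}$, and compare with two applications of $\phi_{UL}$. For example, $0\xi\mapsto00\xi\mapsto000\xi$ under $\phi_{UL}^2$. On the right, $0\xi$ is fixed by $\phi_{(UA)L}$, goes to $00\xi$ under $\phi_{UL}$, and goes to $000\xi$ under $\phi_{(UB)L}$. The hexagon is handled the same way, using $\sigma_U$, $\sigma_{UA}$ and $\sigma_{UB}$ on a refinement to depth three.

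The main obstacle is purely bookkeeping in these last two checks. I must respect the right-to-left convention and refine each row far enough that every factor acts by a single prefix replacement. An error in convention would show up as the mirror relation, so I would double-check each identity on one explicit end such as $U BA^\infty$.
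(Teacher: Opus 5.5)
Your proposal is correct and follows essentially the same route as the paper's proof. Disjoint supports give the three commutation relations. The fact that $\phi_{UL}$ and $\sigma_U$ act on the relevant subcylinders by pure prefix replacements gives every transport relation; this is your conjugation lemma, which the paper uses implicitly. The pentagon and hexagon are then checked by composing refined prefix tables, and this does confirm both identities under the right-to-left convention. For example, on the domain code $\{0,10,110,111\}$ both sides of the pentagon read $0\mapsto000$, $10\mapsto001$, $110\mapsto01$, $111\mapsto1$. Working in binary coordinates rather than in $A$--$B$ words is only a notational difference.
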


\begin{proof}
	All the relations are verified directly from their definitions. Disjointly supported maps commute.  The
	flip $\phi_{UL}$ carries the supporting prefixes
	\[
	UBR,\qquad UABR,\qquad UAAR
	\]
	respectively to
	\[
	UBBR,\qquad UBAR,\qquad UAR,
	\]
	which proves both the flip transport relations
	\eqref{eq:flip-transport-B}--\eqref{eq:flip-transport-AA} and their mixed
	counterparts
	\eqref{eq:mixed-flip-transport-B}--\eqref{eq:mixed-flip-transport-AA}.
	The shuffle $\sigma_U$ interchanges the prefixes $UBR$ and $UAR$, giving
	\eqref{eq:shuffle-transport} and
	\eqref{eq:mixed-shuffle-transport}.  Finally, refining the relevant prefix
	codes and comparing the substitutions on each cylinder gives
	\eqref{eq:flip-pentagon} and \eqref{eq:flip-shuffle-hexagon}.  Thus every
	displayed identity follows on $\cyl{U}$, and both sides are the identity
	outside that cylinder.
\end{proof}


\section{Effective decompositions into flips and shuffles}
\label{sec:effective-decompositions}

The preceding identifications are structural.  We next make them effective
by giving finite procedures that convert prefix data into words in flips and
shuffles.

\paragraph{Computational convention.}
The input is an explicit prefix-code table.  Binary prefixes
and $A$--$B$ addresses are stored as words, and reading,
writing, or comparing them is charged according to their
length.  When integer coordinates are used, arithmetic is
charged at bit cost with schoolbook algorithms.
All complexity bounds refer to the explicit prefix-table
size defined below; they do not assert polynomial complexity
in a compressed continued-fraction or rational-coordinate
encoding alone.

\subsection{Elements of \texorpdfstring{$\T$}{T} as sequences of flips}
For a cyclic prefix table
\[
 t=h_{P,Q,\tau},\qquad
 P=\{p_1,\ldots,p_n\},\quad Q=\{q_1,\ldots,q_n\},
\]
define its \emph{input prefix-table size} by
\begin{equation}\label{eq:prefix-table-size}
 s(P,Q):=n+\sum_{i=1}^n\bigl(|p_i|+|q_i|\bigr).
\end{equation}

Let $\mathscr R\subset \partial_I^{\#}\F$ be the set of classes of eventually
constant ends.  A \emph{boundary pair} is an unordered pair
$\langle x,y\rangle$ of distinct elements of $\mathscr R$.  Two boundary
pairs $\langle x,y\rangle$ and $\langle u,v\rangle$ are \emph{linked}, written
\[
 \langle x,y\rangle\pitchfork\langle u,v\rangle,
\]
if their endpoints alternate in the cyclic order of
$\partial_I^{\#}\F$; equivalently,
after interchanging $u$ and $v$ if necessary,
$u\in(x,y)$ and $v\in(y,x)$.  Define the standard Farey boundary pattern by
\begin{equation}\label{eq:standard-boundary-pattern}
 \mathscr E_0:=
 \left\{
  \left\langle\frac ac,\frac bd\right\rangle:
  |ad-bc|=1
 \right\}.
\end{equation}
Here fractions are reduced and interpreted projectively.  The family $\mathscr E_0$ is pairwise unlinked and maximal with
respect to inclusion: every boundary pair not belonging to
$\mathscr E_0$ is linked with some member of $\mathscr E_0$.
The modular group acts simply transitively on the oriented  boundary pairs of
$\mathscr E_0$.  Hence each orientation of a member of $\mathscr E_0$ has a
unique label $g\in\psl$ carrying $(\infty,0)$ to that particular orientation.  Fix the
shortlex order on the reduced words in the free-product generators
$S,L,L^2$, with $S<L<L^2$.  For each unordered boundary pair choose the orientation
whose label has the smaller reduced word, and order the boundary pairs by
these chosen labels.  This gives an effective enumeration
\begin{equation}\label{eq:farey-edge-order}
 \alpha_1\prec\alpha_2\prec\alpha_3\prec\cdots
 \qquad(\alpha_j\in\mathscr E_0).
\end{equation}

We call a maximal pairwise unlinked family $\mathscr E$ a
\emph{Farey-type boundary pattern} if the symmetric difference
$\mathscr E\mathbin\triangle\mathscr E_0$ is finite.  Such a pattern has the
same local incidence property as $\mathscr E_0$.  More precisely, if
$e=\langle u,v\rangle\in\mathscr E$, a \emph{common neighbour} of $u$ and
$v$ in $\mathscr E$ is a point $w\in\mathscr R\setminus\{u,v\}$ such that
\[
 \langle u,w\rangle,\ \langle v,w\rangle\in\mathscr E.
\]
The endpoints of every $e\in\mathscr E$ have precisely two common
neighbours, one in each of the two complementary intervals determined by
$u$ and $v$. To see this, choose a finite partition into Farey cylinders
sufficiently fine that every member of
$\mathscr E_0\setminus\mathscr E$ joins nonconsecutive
partition endpoints. The pairs joining consecutive endpoints
then belong to $\mathscr E_0\cap\mathscr E$.
Every member of $\mathscr E\setminus\mathscr E_0$ is linked
with a deleted standard pair and unlinked with these common
boundary pairs; hence it also joins nonconsecutive partition
endpoints. Thus all changes are confined to this finite
cyclic list.

The restriction of $\mathscr E$ to this list is maximal
pairwise unlinked. Induction, splitting the list along a
present nonconsecutive pair, shows that each consecutive
pair has exactly one common neighbour among the remaining
points of the list, while each present nonconsecutive pair
has two. The unchanged Farey subdivisions in the
complementary cylinder arcs supply the other neighbour
of each consecutive pair. All other pairs retain their
standard local incidence. This proves the assertion.

Fix the oriented standard Farey edge
\[
 \vec e_0:=(\infty,0).
\]
A \emph{marked Farey-type boundary pattern} is a pair
$(\mathscr E,\vec e)$, where $\mathscr E$ is a Farey-type boundary pattern
and $\vec e=(x,y)$ is an orientation of a member
$\langle x,y\rangle\in\mathscr E$.  

For $t\in\mathsf T$ set
\[
 t(\mathscr E)
 :=\{\langle t(u),t(v)\rangle:
       \langle u,v\rangle\in\mathscr E\},
 \qquad
 t(\vec e):=(t(x),t(y)).
\]
Thus $t$ sends every boundary pair to the pair formed by the images of its
endpoints and transports the orientation of the distinguished edge in the
same order.  Since elements of $\mathsf T$ preserve cyclic order, they
preserve the unlinking relation.  

By \cref{lem:effective-finite-support} below,
$t(\mathscr E_0)\mathbin\triangle\mathscr E_0$ is finite.
Thus
\[
(\mathscr E_t,\vec e_t)
:=\bigl(t(\mathscr E_0),t(\vec e_0)\bigr)
\]
is a marked Farey-type boundary pattern. More generally,
\[
t(\mathscr E)\mathbin\triangle t(\mathscr E_0)
=t(\mathscr E\mathbin\triangle\mathscr E_0)
\]
is finite for every Farey-type pattern $\mathscr E$.
Together with cyclic-order preservation, this shows that
\[
t\cdot(\mathscr E,\vec e)
:=\bigl(t(\mathscr E),t(\vec e)\bigr)
\]
defines an action on marked Farey-type boundary patterns.
The stabilizer of $\mathscr E_0$ in $\mathsf T$ is exactly
$\psl$. Indeed, let $h$ preserve $\mathscr E_0$, and choose
$m\in\psl$ with $m(\vec e_0)=h(\vec e_0)$.
Then $m^{-1}h$ preserves $\mathscr E_0$ and fixes $0$
and $\infty$. Since it preserves cyclic order, it fixes
their unique common neighbour in each complementary
interval. Repeating this argument through the Farey
subdivisions shows that it fixes every rational boundary
point. Density and continuity therefore give $m^{-1}h=1$. 
Consequently, the stabilizer of
$(\mathscr E_0,\vec e_0)$ is trivial.
For $t\in\mathsf T$, the finite symmetric difference
\[
\operatorname{Def}(t):=
\mathscr E_t\mathbin\triangle\mathscr E_0
\]
is called the \emph{boundary defect} of $t$. For an admissible prefix $W$, put $\alpha_W:=\langle W(0),W(\infty)\rangle\in\mathscr E_0$.

\begin{lemma}
\label{lem:effective-finite-support}
From a cyclic prefix table for $t\in\mathsf T$, one can compute the marked
edge $\vec e_t$ and the two finite sets
\[
 \mathscr E_t\setminus\mathscr E_0,
 \qquad
 \mathscr E_0\setminus\mathscr E_t,
\]
and hence $\operatorname{Def}(t)$.  If the table has $n$ entries and size
$s=s(P,Q)$, then
\begin{equation}\label{eq:defect-table-bound}
 \bigl|\mathscr E_t\setminus\mathscr E_0\bigr|\leq n-1,
 \qquad
 \bigl|\mathscr E_0\setminus\mathscr E_t\bigr|\leq n-1,
 \qquad
 |\operatorname{Def}(t)|\leq2(n-1)\leq2s.
\end{equation}
\end{lemma}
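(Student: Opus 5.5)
Write $P=\{p_1<\cdots<p_n\}$ and $Q=\{q_1<\cdots<q_n\}$, and let $P_i,Q_i$ be the admissible Farey prefixes whose binary codes are $p_i,q_i$; decode each $\delta c_1\cdots c_k$ letter by letter into $S^\delta C_1\cdots C_k$ with $C_j\in\{A,B\}$. If $n=1$ then $t$ is the identity, $\mathscr E_t=\mathscr E_0$, and all three bounds hold trivially, so assume $n\ge2$. The cylinders $\cyl{P_i}$ then cut the smashed boundary into $n$ consecutive closed Farey arcs $I(P_i)$, with rational endpoints $x_1,\dots,x_n$ in cyclic order. On each arc, \eqref{eq:prefix-projective} says that $t$ acts by the modular element $g_i:=Q_{\tau(i)}P_i^{-1}$.

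The first step is to split $\mathscr E_0$ into three classes: the boundary pairs lying inside a single arc $I(P_i)$, the $n$ pairs $\langle x_i,x_{i+1}\rangle$, and the remaining pairs, which join two nonconsecutive partition points (the chords $\langle x_i,x_j\rangle$ with $j\neq i\pm1$) and are therefore standard.

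The claim is that every standard pair is either contained in one closed arc or is such a chord. This uses the local finiteness argument already given before the lemma. Since the $\cyl{P_i}$ form a Farey-cylinder partition, the points $x_i$ together with the pairs $\langle x_i,x_{i+1}\rangle=\alpha_{P_i}$ form a finite Farey subpolygon. Any standard pair with an endpoint strictly inside some $I(P_i)$ would be linked with $\alpha_{P_i}$ unless it stays inside that arc, and standard pairs are pairwise unlinked.

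Now $t$ restricted to $I(P_i)$ is the modular map $g_i$. It therefore sends the standard pairs inside $I(P_i)$ bijectively onto the standard pairs inside $I(Q_{\tau(i)})$, and it sends $\alpha_{P_i}$ to $\alpha_{Q_{\tau(i)}}$. Because $\tau$ is a cyclic shift, $t$ sends $x_i$ to the corresponding endpoint $y_{\tau(i)}$ of the $Q$-partition. It follows that $\mathscr E_t$ and $\mathscr E_0$ agree on every pair lying in a closed arc of the $Q$-partition. They can differ only on the chords joining nonconsecutive partition points: in $\mathscr E_0$ these are the standard chords of the $Q$-polygon, and in $\mathscr E_t$ they are the images $\langle y_{\tau(i)},y_{\tau(j)}\rangle$ of the standard chords $\langle x_i,x_j\rangle$ of the $P$-polygon.

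Each polygon is a triangulated $n$-gon, so it has exactly $n-3$ interior diagonals. This gives $|\mathscr E_t\setminus\mathscr E_0|\le n-3\le n-1$, the same bound for the other difference, and $|\operatorname{Def}(t)|\le 2(n-1)\le 2s$ because $n\le s$.

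Effectivity goes as follows.
1. Compute each $x_i$ as $P_i(0)$ or $P_i(\infty)$ by multiplying the matrices $A,B$ along the prefix, and similarly each $y_j$.
2. List the standard chords of the $P$-polygon by testing $|ad-bc|=1$ on all pairs of nonconsecutive vertices; equivalently, read them off as the prefix-tree ancestors $\alpha_W$ for $W$ a proper prefix of some $P_i$.
3. Push these chords forward by the cyclic relabelling $x_i\mapsto y_{\tau(i)}$.
4. Compare with the standard chords of the $Q$-polygon.
5. Obtain the marked edge as $\vec e_t=g_i(\vec e_0)$, after locating the arc that contains $0$ and $\infty$; since these are rational, they lie among the partition points, or $\vec e_0$ lies inside an arc, and in either case evaluate the appropriate $g_i$.

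The main obstacle I expect is justifying that $t$ maps pairs contained in one arc to standard pairs, in particular at shared endpoints: the two one-sided maps $g_{i-1}$ and $g_i$ must agree at $x_i$. This follows because $t$ descends to the smashed boundary (\cref{prop:flip-descends} together with \cref{thm:flip-conjugacy}), so the evaluation at $x_i$ is well defined.
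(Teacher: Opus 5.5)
Your proposal is correct and is essentially the paper's argument. Every standard pair either lies in one closed arc $\overline{I(P_i)}$, where $t$ acts by the modular element $Q_{\tau(i)}P_i^{-1}$ and so sends it to a standard pair, or it is one of the finitely many pairs attached to internal nodes of the prefix tree (the paper's set $\mathscr A(P)$, your nonconsecutive chords), and you treat the $Q$ side symmetrically. Your polygon count gives the sharper bound $n-3$ for $n\geq3$ (for $n=2$ there are no chords at all). The paper's $\mathscr A(P)$ can also contain sides, so it only bounds the defect by $n-1$.
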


\begin{proof}
	Write $t=h_{P,Q,\tau}$. If the table has one row, then $t=1$,
	so the defect is empty and $\vec e_t=\vec e_0$.
	Assume henceforth that $n\geq2$.
	
	For $R=P$ or $R=Q$, let
	\[
	\operatorname{Int}^*(R)
	:=
	\{u\in\{0,1\}^{*}:
	u\ne\varnothing,\ 
	u\text{ is a proper prefix of some }r\in R\}.
	\]
	For each such $u$, let $\widehat u$ be its admissible Farey
	prefix, and put
	\[
	\mathscr A(R)
	:=
	\{\alpha_I\}\cup
	\{\alpha_{\widehat u}:u\in\operatorname{Int}^*(R)\}.
	\]
	A maximal prefix code with $n$ words is obtained from
	$\{\varnothing\}$ by $n-1$ cylinder splittings.  Exactly one
	splitting occurs at the empty prefix, so
	\[
	|\operatorname{Int}^*(R)|=n-2,
	\qquad
	|\mathscr A(R)|\leq n-1.
	\]
	
	Every standard boundary pair outside $\mathscr A(P)$ lies in
	the closed Farey arc associated with a word of $P$.
	On that arc, $t$ agrees with an element of $\psl$, by
	\eqref{eq:prefix-projective}, and hence maps the pair to a
	standard boundary pair. Consequently,
	\[
	\mathscr E_t\setminus\mathscr E_0
	=
	\{t(\alpha):\alpha\in\mathscr A(P),\
	t(\alpha)\notin\mathscr E_0\}.
	\]
	Applying the same argument to
	$t^{-1}=h_{Q,P,\tau^{-1}}$ gives
	\[
	\mathscr E_0\setminus\mathscr E_t
	=
	\{\beta\in\mathscr A(Q):
	t^{-1}(\beta)\notin\mathscr E_0\}.
	\]
	These finite sets are computable: evaluate their endpoints
	using the prefix tables and continued-fraction coordinates,
	and test the Farey condition on primitive integer
	representatives. Their cardinalities are each at most $n-1$,
	which proves \eqref{eq:defect-table-bound}.
	
	Finally, the table computes $t(\infty)$ and $t(0)$, and hence
	the marked edge
	\[
	\vec e_t=(t(\infty),t(0)).
	\]

\end{proof}

We now isolate the finite part on which two Farey-type boundary patterns
differ.  Let $\mathscr E$ and $\mathscr F$ be Farey-type boundary patterns
and put
\[
D_{\mathscr E}:=\mathscr E\setminus\mathscr F,
\qquad
D_{\mathscr F}:=\mathscr F\setminus\mathscr E.
\]
Both sets are finite.

An \emph{$\mathscr E$-boundary triple} is a set of boundary pairs of the
form
\[
\Delta_{\mathscr E}(u,v;w)
:=
\bigl\{
\langle u,v\rangle,
\langle u,w\rangle,
\langle v,w\rangle
\bigr\},
\]
where $w$ is a common neighbour of the endpoints of
$\langle u,v\rangle$ in $\mathscr E$.  If
$e=\langle u,v\rangle\in D_{\mathscr E}$ and $x_e,z_e$ are its two common
neighbours in $\mathscr E$, then $e$ belongs to precisely the two
$\mathscr E$-boundary triples
\[
\Delta_{\mathscr E}(u,v;x_e),
\qquad
\Delta_{\mathscr E}(u,v;z_e).
\]

Let $\Gamma_{\mathscr E,\mathscr F}$ have as vertices the
$\mathscr E$-boundary triples incident to at least one pair in
$D_{\mathscr E}$. For each $e\in D_{\mathscr E}$, join its two
adjacent triples by one edge labelled $e$.
Thus its edge set is naturally identified with $D_{\mathscr E}$.
The graph is finite and has no isolated vertices.  It is
acyclic: an edge labelled $e$ joins boundary triples on opposite
sides of $e$, whereas every other adjacency remains on one
side, since all pairs in $\mathscr E$ are unlinked with $e$.
Deleting that edge therefore separates its endpoints.
Thus every edge is a bridge, and
$\Gamma_{\mathscr E,\mathscr F}$ is a forest.

For a connected component $K$ of $\Gamma_{\mathscr E,\mathscr F}$,
define its boundary by
\[
\partial K
:=
\left\{
\gamma\in\mathscr E:
\gamma\text{ occurs in exactly one boundary triple belonging to }K
\right\}.
\]
Let $k$ be the number of edges of $K$. Since there are no isolated
vertices, $k\geq1$. Since $K$ is a tree, it has $k+1$ vertices,
that is, $k+1$ boundary triples. Gluing these triples along the
$k$ pairs labelling the edges of $K$ leaves
\[
3(k+1)-2k=k+3
\]
boundary pairs. Successively removing a leaf triple shows
inductively that these pairs form a single cyclic list.
Consequently, there are $r=k+3\geq4$ distinct boundary points
$v_1,\ldots,v_r$, listed in cyclic order, such that
\[
\partial K
=
\{\langle v_i,v_{i+1}\rangle:1\leq i\leq r\},
\]
where indices are read modulo $r$.
The cyclic listing $C_K=(v_1,\ldots,v_r)$, understood up to
cyclic shift and reversal, is called the
\emph{disagreement block associated with $K$}.

Moreover, $\partial K\subseteq\mathscr E\cap\mathscr F$.
Indeed, if $\gamma\in\partial K$ did not belong to $\mathscr F$,
then $\gamma\in D_{\mathscr E}$, so both
$\mathscr E$-boundary triples adjacent to $\gamma$ would be
vertices of $\Gamma_{\mathscr E,\mathscr F}$ joined by the edge
labelled $\gamma$. Since one of these triples belongs to $K$,
the other would also belong to $K$, contradicting the definition
of $\partial K$.
	
Given $C_K$, a boundary pair $\langle v_i,v_j\rangle$ is \emph{internal in $C_K$} if
$v_i$ and $v_j$ are not consecutive in the cyclic list.  Define
\[
\begin{aligned}
\mathscr E[C_K]
&:=\{e\in\mathscr E:e\text{ is internal in }C_K\},\\
\mathscr F[C_K]
&:=\{e\in\mathscr F:e\text{ is internal in }C_K\}.
\end{aligned}
\]

\begin{lemma}
	\label{lem:finite-block-decomposition}
	We have:
	\begin{enumerate}
		\item $\mathscr E[C_K]$ and $\mathscr F[C_K]$ are maximal pairwise
		unlinked families of internal pairs of $C_K$;		
		\item $\mathscr E[C_K]\cap\mathscr F[C_K]=\varnothing$;
		
		\item the finite differences decompose as
		\[
		\mathscr E\setminus\mathscr F
		=
		\bigsqcup_K\mathscr E[C_K],
		\qquad
		\mathscr F\setminus\mathscr E
		=
		\bigsqcup_K\mathscr F[C_K].
		\]
	\end{enumerate}
	
	In particular,
	\begin{equation}\label{eq:defect-cardinalities-equal}
		|\mathscr E\setminus\mathscr F|
		=
		|\mathscr F\setminus\mathscr E|
		=
		\sum_K\bigl(|C_K|-3\bigr).
	\end{equation}
	The decomposition can be computed from the finite symmetric difference and
	the common-neighbour relation.
\end{lemma}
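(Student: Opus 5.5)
We use the polygon structure of each component $K$ established above: the $k+1$ boundary triples of $K$ are glued along the $k$ edge labels, and the outer pairs form the cyclic list $\partial K=\{\langle v_i,v_{i+1}\rangle\}$ with $r=k+3$. Every pair occurring in a triple of $K$ has both endpoints among $v_1,\ldots,v_r$, since the triples are glued only along the pairs $\langle v_i,v_j\rangle$. The set of non-boundary pairs in these triples is exactly the edge set of $K$. Each such pair is internal: a non-boundary pair lies in two triples of $K$ on opposite sides, so it cannot join consecutive points. Hence the triples of $K$ form a triangulation of the $r$-gon $C_K$ whose diagonals are the labels of $K$. A triangulation of an $r$-gon has $r-3=k$ diagonals and is a maximal unlinked family of internal pairs.

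To prove (1) for $\mathscr E$, we show $\mathscr E[C_K]$ equals this set of labels. Every label lies in $\mathscr E$ and is internal, so the labels are contained in $\mathscr E[C_K]$. Conversely, by maximality of the triangulation, any other internal pair of $C_K$ is linked with some label; since $\mathscr E$ is pairwise unlinked, no other internal pair belongs to $\mathscr E$. Thus $\mathscr E[C_K]=\{\text{labels of }K\}\subseteq D_{\mathscr E}$, and it is maximal.

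For $\mathscr F$, we use $\partial K\subseteq\mathscr E\cap\mathscr F$, which was proved above. The boundary pairs cut the circle, so every $f\in\mathscr F$ either has both endpoints in $C_K$, or lies outside, meaning its endpoints lie in the closure of a single arc $[v_i,v_{i+1}]$. If $\mathscr F[C_K]$ were not a maximal unlinked family of internal pairs, some internal pair $g$ of $C_K$ would be unlinked with all of $\mathscr F[C_K]$. Such a $g$ is also unlinked with $\partial K$ and with every $f\in\mathscr F$ lying outside $C_K$, since those stay within one arc. Maximality of $\mathscr F$ then forces $g\in\mathscr F$, a contradiction. This proves (1).

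For (2), suppose $e\in\mathscr E[C_K]\cap\mathscr F$. Then $e$ is a label, hence in $D_{\mathscr E}$, and so $e\notin\mathscr F$, a contradiction.

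For (3) and the left-hand decomposition, each $e\in D_{\mathscr E}$ is the label of exactly one edge, which lies in exactly one component. Disjointness across components follows because distinct components have distinct label sets.

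The main obstacle is the right-hand decomposition: every $f\in D_{\mathscr F}$ must be internal to some $C_K$. We argue as follows. Since $f\notin\mathscr E$ and $\mathscr E$ is maximal, $f$ is linked with some $e\in\mathscr E$, and $e\notin\mathscr F$, so $e\in D_{\mathscr E}$. Let $K$ be the component containing $e$. The pair $f$ is unlinked with every member of $\partial K$, because both lie in $\mathscr F$. Since $f$ crosses the diagonal $e$ of the polygon $C_K$, its endpoints cannot lie in the closure of a single outer arc; such an $f$ would be unlinked with the diagonal $e$. The only remaining possibility is that both endpoints of $f$ are vertices of $C_K$, nonconsecutive because $f$ crosses $e$. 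Hence $f\in\mathscr F[C_K]$. The component $K$ is unique, because internal pairs of distinct polygons cannot coincide: this would force two triangulated regions to overlap, whereas the regions of distinct components have disjoint interiors, since each triple belongs to a single component.

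The cardinality count follows directly. A maximal unlinked family of internal pairs in an $r$-gon has $r-3$ members, which gives \eqref{eq:defect-cardinalities-equal}.

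The computability claim holds because the construction is effective. The common neighbours of each $e\in D_{\mathscr E}$ can be found by a finite search among endpoints of $D_{\mathscr E}$ together with the Farey neighbours. From these we build $\Gamma_{\mathscr E,\mathscr F}$, take its components, read off $C_K$ from the outer pairs, and sort $D_{\mathscr F}$ by vertex membership.
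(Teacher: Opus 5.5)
Your proof is correct and follows essentially the paper's route: you identify $\mathscr E[C_K]$ with the labels of $K$, derive maximality of $\mathscr F[C_K]$ from $\partial K\subseteq\mathscr F$ (which confines every other member of $\mathscr F$ to a single closed arc $[v_i,v_{i+1}]$), and place each $f\in\mathscr F\setminus\mathscr E$ in the block of a pair $e\in\mathscr E\setminus\mathscr F$ that it crosses. The one real divergence is uniqueness of that block: you use disjointness of the triangulated regions, which tacitly relies on distinct $\mathscr E$-triples having disjoint interiors, whereas the paper stays combinatorial by noting that $e$ is neither internal in any other $C_{K'}$ nor linked with $\partial K'$, and so is unlinked with every internal pair of $C_{K'}$.
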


\begin{proof}
	Let $K$ have $k$ edges.  By construction, the internal members of
	$\mathscr E$ in $C_K$ are precisely the labels of the edges of $K$.
	Therefore
	\[
	|\mathscr E[C_K]|=k=|C_K|-3.
	\]
	In particular, the families $\mathscr E[C_K]$ partition
	$\mathscr E\setminus\mathscr F$.
	
	There is no pair belonging to both $\mathscr E[C_K]$ and $\mathscr F[C_K]$: a common internal pair would be an internal member of $\mathscr E$, hence a label of $K$, hence not in $\mathscr F$.
		
	Let $f\in\mathscr F\setminus\mathscr E$.  By maximality of $\mathscr E$,
	the pair $f$ is linked with some $e\in\mathscr E$.  Since $f$ is unlinked
	with every member of $\mathscr F$, the pair $e$ cannot belong to
	$\mathscr F$.  Hence
	\[
	e\in\mathscr E\setminus\mathscr F.
	\]
	Let $K$ be the component containing the edge labelled by $e$.  Every member
	of $\partial K$ belongs to $\mathscr F$, so $f$ is unlinked with
	$\partial K$.  Since $f$ is linked with an internal pair of $C_K$, the
	cyclic order implies that $f$ is itself internal in $C_K$.  Moreover, $f$ is internal in no other block $C_{K'}$: the pair $e$ is not internal in $C_{K'}$, since the internal members of $\mathscr E$ in $C_{K'}$ are the labels of $K'$, and $e$ is unlinked with $\partial K'$; hence $e$ is unlinked with every internal pair of $C_{K'}$.  Since also $\mathscr F[C_K]\cap\mathscr E\subseteq\mathscr E[C_K]\cap\mathscr F[C_K]=\varnothing$, the families $\mathscr F[C_K]$ partition $\mathscr F\setminus\mathscr E$.
	
%
	
	Both restrictions are maximal among pairwise unlinked internal
	pairs of $C_K$.  Indeed, let
	$\mathscr H\in\{\mathscr E,\mathscr F\}$.
	Since $\partial K\subseteq\mathscr H$, a member of $\mathscr H$
	linked with an internal pair of $C_K$ must itself be internal
	in $C_K$.  Hence an additional internal pair unlinked with
	$\mathscr H[C_K]$ would be unlinked with all of $\mathscr H$,
	contradicting maximality.
	
	Finally, a maximal pairwise unlinked family of internal pairs on $r$
	cyclically ordered points has $r-3$ members.  Indeed, an internal pair
	divides the cyclic list into two shorter lists, and induction gives
	\[
	(r_1-3)+(r_2-3)+1=r-3,
	\qquad r_1+r_2=r+2.
	\]
	Consequently,
	\[
	|\mathscr E[C_K]|
	=
	|\mathscr F[C_K]|
	=
	|C_K|-3.
	\]
	Summing over the components proves
	\eqref{eq:defect-cardinalities-equal}.
\end{proof}

If $e=\langle u,v\rangle\in\mathscr E$ has common neighbours $x$ and $z$,
the \emph{diagonal exchange at $e$} is
\begin{equation}\label{eq:boundary-diagonal-exchange}
 D_e(\mathscr E):=
 \bigl(\mathscr E\setminus\{\langle u,v\rangle\}\bigr)
 \cup\{\langle x,z\rangle\}.
\end{equation}
The new pair is unlinked with every member of $\mathscr E \setminus \{e\}$;
hence $D_e(\mathscr E)$ is again a Farey-type boundary pattern.  At the
level of boundary patterns this exchange is involutive. 

\begin{lemma}\label{lem:exchange-flip}
	\begin{enumerate}
		\item If $g\in\mathsf T$, $\mathscr E$ is a Farey-type boundary pattern and
		$e\in\mathscr E$, then $g(\mathscr E)$ is a Farey-type boundary pattern and
		\[
		g\bigl(D_e(\mathscr E)\bigr)=D_{g(e)}\bigl(g(\mathscr E)\bigr).
		\]
		\item Every $e\in\mathscr E_0\setminus\{\alpha_I\}$ equals $\alpha_{UB}$ or $\alpha_{UA}$
		for a unique admissible prefix $U$ and a unique letter.  Put
		\[
		\Phi(e):=
		\begin{cases}
			\phi_I,& e=\alpha_I,\\
			\phi_{UL},& e=\alpha_{UB},\\
			\phi_{UL^2},& e=\alpha_{UA}.
		\end{cases}
		\]
		Then $\Phi(e)(\mathscr E_0)=D_e(\mathscr E_0)$ for every $e\in\mathscr E_0$.
	\end{enumerate}
\end{lemma}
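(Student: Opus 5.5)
For part (1), the plan is to use only that $g$ is a cyclic-order-preserving bijection of $\mathscr R$ (a homeomorphism of $\partial_I^\#\F$ carrying eventually constant ends to eventually constant ends, since it acts by finitely many elements of $\psl$ by \eqref{eq:prefix-projective}). Linking is defined through cyclic alternation, so $g$ preserves both linking and unlinking. Hence $g(\mathscr E)$ is pairwise unlinked, and it is maximal because $g^{-1}$ carries any extension back. Finiteness of $g(\mathscr E)\mathbin\triangle\mathscr E_0$ was already observed before the statement, using \cref{lem:effective-finite-support}. The common-neighbour relation is defined purely by membership of pairs, so $w$ is a common neighbour of $u,v$ in $\mathscr E$ if and only if $g(w)$ is one of $g(u),g(v)$ in $g(\mathscr E)$. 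Applying $g$ to \eqref{eq:boundary-diagonal-exchange} term by term then gives $g(D_e(\mathscr E))=D_{g(e)}(g(\mathscr E))$.

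For part (2), uniqueness comes first. The plan is to use that $\alpha_W$ is the pair of endpoints of the Farey arc $I(W)$, and that the map from prefixes to edges of the tessellation is bijective up to the evident ambiguities. Every standard edge other than $\alpha_I$ is the "new" edge created when a Farey arc $I(U)$ is split, that is, it joins the mediant to one endpoint. So it equals exactly one of $\alpha_{UB}$ or $\alpha_{UA}$, where $U$ is the parent cylinder and the letter records which child arc shares that edge. I would also check the two initial sides, $U=I$ versus $U=S$: $\alpha_{L^2S}$-type edges and $\alpha_{SL\dots}$ edges must be distinct. This can be read off \cref{fig:kappa-coding} and the continued-fraction coordinates.

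Next, the case $e=\alpha_I=\langle\infty,0\rangle$ is handled directly. Its common neighbours are $1$ and $-1$, so $D_e(\mathscr E_0)=(\mathscr E_0\setminus\{\alpha_I\})\cup\{\langle 1,-1\rangle\}$. The base flip $\phi_I$ acts on the four cylinders $00,01,10,11$ by a cyclic rotation, and on each cylinder by an element of $\psl$. So $\phi_I(\mathscr E_0)$ agrees with $\mathscr E_0$ except on the edges of the quadrilateral $\infty,1,0,-1$ spanned by the cylinder endpoints. There I would compute that $\phi_I$ sends $(\infty,1,0,-1)$ to its rotation by one step. The pairs it creates are then the four sides together with $\langle 1,-1\rangle$, and $\alpha_I$ is lost.

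The case $e=\alpha_{UB}$ with $\phi_{UL}$ is treated the same way. The flip is the identity outside $\cyl U$, and on each of the three cylinders $UB,UAB,UAA$ it is a $\psl$ element. So the only pairs in question are the internal pairs of the pentagon with vertices the endpoints of $I(UB),I(UAB),I(UAA)$. Writing $U(0),U(\infty)$ and the intermediate Farey points as $a$, $m$ (the mediant), and $m'$, the prefix map $(UB,UAB,UAA)\mapsto(UBB,UBA,UA)$ moves the subdivision points of $I(U)$ from $\{m,m'\}$ to $\{m'',m\}$. Equivalently, it replaces one triangulation of the quadrilateral $U(0),m,m',U(\infty)$ (or the appropriate four Farey points) by the other. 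It deletes exactly $e$ and inserts the opposite diagonal. The case $\alpha_{UA}$ follows by inverting, since $\phi_{UL^2}=\phi_{UL}^{-1}$ and the exchange is involutive: $\phi_{UL}$ exchanges $\alpha_{UB}$ into some diagonal $e'$, and I would verify that $e'=\alpha_{UA}$ in $\mathscr E_0$ after relabelling. Alternatively, it can be checked directly in the same way.

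The main obstacle is this bookkeeping: identifying which Farey points the three cylinders' endpoints are, and confirming that the new diagonal is a member of $D_e(\mathscr E_0)$, namely that the two common neighbours of $e$ are exactly the other two quadrilateral vertices. I would reduce it to the single case $U=I$ or $U=S$ by conjugating with the $\psl$ element $U$ acting near $I(U)$, using part (1) and \eqref{eq:prefix-projective}. That leaves one explicit computation with the numbers $0,1,\tfrac12,\infty$.
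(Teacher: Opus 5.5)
Your proposal is correct and is essentially the paper's proof: part (1) by preservation of cyclic order, linking and common neighbours, and part (2) by the four-arc rotation for $\phi_I$ and the breakpoint analysis of $\phi_L$, transported to $\phi_{UL}=U\phi_LU^{-1}$ by part (1). Your Farey-subdivision uniqueness argument and your involutivity derivation of the $\alpha_{UA}$ case are only minor variants of the paper's $\psl$ normal-form argument and its direct computation for $\phi_L^{-1}$.

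Two small slips need fixing. First, $e=\alpha_{UB}=\langle U(0),U(1)\rangle$ is a diagonal of the quadrilateral $U(0),U(\tfrac12),U(1),U(\infty)$, which is the one your final computation with $0,\tfrac12,1,\infty$ actually uses; it is not a diagonal of $U(0),U(1),U(2),U(\infty)$, where it is a side. Second, the new diagonal $e'=\langle U(\tfrac12),U(\infty)\rangle$ is not itself $\alpha_{UA}$. It satisfies $\phi_{UL}^{-1}(e')=\alpha_{UA}$, and together with part (1) this gives $\phi_{UL^2}(\mathscr E_0)=D_{\alpha_{UA}}(\mathscr E_0)$.
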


\begin{proof}
	(1) The map $g$ is a bijection of $\mathscr R$ preserving cyclic order, and so
	is $g^{-1}$.  Hence $g$ preserves linking, maximality and common neighbours,
	and
	$g(\mathscr E)\mathbin\triangle\mathscr E_0\subseteq
	g(\mathscr E\mathbin\triangle\mathscr E_0)\cup
	\bigl(g(\mathscr E_0)\mathbin\triangle\mathscr E_0\bigr)$ is finite.
	
	(2) The group $\psl$ acts transitively on $\mathscr E_0$, and the stabilizer of
	$\alpha_I=\langle0,\infty\rangle$ is $\{1,S\}$.  Every element of $\psl$ is $W$
	or $WS$, where the reduced word of $W$ is empty or ends in $S$; such $W$ are
	exactly the admissible prefixes.  Thus $e=\alpha_W$ for an admissible $W$, which
	is unique unless $W\in\{I,S\}$, that is, unless $e=\alpha_I$.  For
	$e\neq\alpha_I$, deleting the last turn of $W$ gives the unique $U$ and letter
	with $W\in\{UB,UA\}$.
	
	By \eqref{eq:prefix-projective}, $\phi_L$ maps the closed arcs $[0,1]$, $[1,2]$,
	$[2,\infty]$ onto $[0,\frac12]$, $[\frac12,1]$, $[1,\infty]$ by elements of
	$\psl$, and fixes $[\infty,0]$ pointwise.  A standard pair is unlinked with the
	standard pairs bounding these arcs.  Hence it lies in one arc, and is then
	mapped to a standard pair, or it joins two of the breakpoints $0,1,2,\infty$.
	The only standard pair of the second kind bounding no arc is
	$\langle1,\infty\rangle$, and
	$\phi_L\langle1,\infty\rangle=\langle\frac12,\infty\rangle\notin\mathscr E_0$.
	The same argument for $\phi_L^{-1}$, with breakpoints $0,\frac12,1,\infty$,
	shows that $\langle0,1\rangle$ is the only standard pair whose preimage,
	$\langle0,2\rangle$, is not standard.  Since $\frac12$ and $\infty$ are the
	common neighbours of $0$ and $1$, and $0$ and $2$ are those of $1$ and $\infty$,
	\[
	\phi_L(\mathscr E_0)=D_{\alpha_B}(\mathscr E_0),
	\qquad
	\phi_{L^2}(\mathscr E_0)=\phi_L^{-1}(\mathscr E_0)=D_{\alpha_A}(\mathscr E_0).
	\]
	By \eqref{eq:base-flip}, $\phi_I$ permutes the arcs $[0,1]$, $[1,\infty]$,
	$[\infty,-1]$, $[-1,0]$ cyclically by elements of $\psl$.  The only standard
	pair joining two breakpoints and bounding no arc is $\langle0,\infty\rangle$,
	and $\phi_I^{\pm1}\langle0,\infty\rangle=\langle-1,1\rangle\notin\mathscr E_0$.
	Hence $\phi_I(\mathscr E_0)=D_{\alpha_I}(\mathscr E_0)$.
	
	Finally, let $U$ be admissible.  Left multiplication by $U$ maps $\cyl{I}$ onto
	$\cyl{U}$ by $Y\mapsto UY$, so \cref{def:left-flip} gives
	$\phi_{UL}=U\phi_LU^{-1}$ and $\phi_{UL^2}=U\phi_{L^2}U^{-1}$.  Since
	$U(\mathscr E_0)=\mathscr E_0$, $U(\alpha_B)=\alpha_{UB}$ and
	$U(\alpha_A)=\alpha_{UA}$, part (1) gives
	\[
	\phi_{UL}(\mathscr E_0)=U\bigl(D_{\alpha_B}(\mathscr E_0)\bigr)
	=D_{\alpha_{UB}}(\mathscr E_0),
	\qquad
	\phi_{UL^2}(\mathscr E_0)=D_{\alpha_{UA}}(\mathscr E_0).
	\qedhere
	\]
\end{proof}

For a disagreement block $C=(v_1,\ldots,v_r)$, write
\[
\partial C
:=\{\langle v_i,v_{i+1}\rangle:1\leq i\leq r\},
\qquad v_{r+1}:=v_1.
\]
Thus $\partial C_K=\partial K$. Define the \emph{fan in $C$ based
at $v_1$} by
\begin{equation}\label{eq:boundary-fan}
 \mathscr F_C(v_1):=
 \partial C\cup
 \{\langle v_1,v_j\rangle:3\leq j\leq r-1\}.
\end{equation}

\paragraph{Fan reduction in a block.}
%

Let $\mathscr M$ be a maximal pairwise unlinked family of internal pairs of
$C=(v_1,\ldots,v_r)$.  Put
\[
d_{\mathscr M}(v_1)
:=
\bigl|
\{\langle v_1,v_j\rangle\in\mathscr M:3\leq j\leq r-1\}
\bigr|.
\]

\begin{lemma}[Block fan reduction]
	\label{lem:block-fan-reduction}
	Let $C=(v_1,\ldots,v_r)$, with $r\geq4$, and let $\mathscr M$ be a maximal
	pairwise unlinked family of internal pairs of $C$.  There is a sequence of
	\[
	r-3-d_{\mathscr M}(v_1)
	\]
	diagonal exchanges, all supported inside $C$ and fixing $\partial C$, which
	transforms
	\[
	\mathscr M\cup\partial C
	\]
	into the fan $\mathscr F_C(v_1)$.  In particular, the sequence contains at
	most $r-3$ exchanges.
\end{lemma}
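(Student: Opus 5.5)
We argue by induction on the quantity $m:=r-3-d_{\mathscr M}(v_1)$, the number of internal pairs of $\mathscr M$ not incident to $v_1$. By \cref{lem:finite-block-decomposition}, or the counting argument at the end of its proof, $|\mathscr M|=r-3$, so $m$ is well defined, $m\geq0$, and $m\leq r-3$. If $m=0$, every member of $\mathscr M$ has the form $\langle v_1,v_j\rangle$ with $3\leq j\leq r-1$. Since there are $r-3$ such pairs and $|\mathscr M|=r-3$, we get $\mathscr M\cup\partial C=\mathscr F_C(v_1)$, and no exchange is needed.

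For the inductive step, suppose $m>0$. We show that a single diagonal exchange, performed at a pair of $\mathscr M$ not incident to $v_1$, increases $d(v_1)$ by one. Consider the triangulation of the polygon $C$ given by $\mathscr M\cup\partial C$; it is a triangulation because a maximal unlinked family of internal pairs on $r$ points is one. Let $v_1$ have neighbours, listed in cyclic order, $v_2=v_{j_0},v_{j_1},\ldots,v_{j_d},v_{j_{d+1}}=v_r$. These are the endpoints of the pairs of $\mathscr M\cup\partial C$ containing $v_1$. Since $m>0$, some consecutive pair satisfies $j_{i+1}>j_i+1$. Then $\langle v_{j_i},v_{j_{i+1}}\rangle$ is an internal pair. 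It belongs to $\mathscr M$, because it is a side of the triangle at $v_1$ between two consecutive fan edges. Call it $e$.

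The common neighbours of the endpoints of $e$ within $\mathscr M\cup\partial C$ are $v_1$ on one side and some $v_k$ on the other, with $j_i<k<j_{i+1}$. Exchanging $e$ replaces it by $\langle v_1,v_k\rangle$, which is internal and not already present. This raises $d(v_1)$ by one and lowers $m$ by one. The pairs of $\partial C$ are untouched, since $e$ is internal. The exchange is supported inside $C$: only the points $v_1,v_{j_i},v_k,v_{j_{i+1}}$ are involved.

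The main point requiring care is that this exchange, described inside the finite polygon, agrees with the global diagonal exchange $D_e$ of \eqref{eq:boundary-diagonal-exchange} in the ambient Farey-type pattern. In other words, the two common neighbours of $e$ in the whole pattern must be exactly $v_1$ and $v_k$. This holds because $\partial C$ belongs to the pattern and every pair is unlinked with $\partial C$. Hence any common neighbour $w$ of the endpoints of an internal pair must lie among $v_1,\ldots,v_r$: otherwise $w$ would lie in an arc $(v_i,v_{i+1})$, and then $\langle u,w\rangle$ would link $\langle v_i,v_{i+1}\rangle$ unless $u\in\{v_i,v_{i+1}\}$; the same holds for $v$, which forces $\{u,v\}=\{v_i,v_{i+1}\}$, contradicting that the pair is internal. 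Within the polygon, the common neighbours on each side are unique by the incidence property established earlier. Each step therefore yields a pattern whose restriction to $C$ is again a maximal unlinked family of internal pairs, so the induction applies, and the total number of exchanges is exactly $r-3-d_{\mathscr M}(v_1)\leq r-3$.
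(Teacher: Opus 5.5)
Your proof is correct and follows the paper's argument. You pick two consecutive neighbours of $v_1$ with a gap between them, observe that the pair joining them lies in $\mathscr M$, and exchange it for $\langle v_1,z\rangle$, where $z$ is the opposite common neighbour; each such exchange raises $d_{\mathscr M}(v_1)$ by one. The differences are cosmetic: you justify membership in $\mathscr M$ through the triangulation picture rather than the paper's linking-plus-maximality argument, use $|\mathscr M|=r-3$ for termination rather than the maximality of the fan, and also verify that the polygon-local exchange agrees with the ambient $D_e$, which the paper leaves implicit.
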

\begin{proof}
	List in cyclic order the points joined to $v_1$ by members of
	$\mathscr M\cup\partial C$.  Suppose that this list does not contain every
	point of $C$ other than $v_1$.  Choose two consecutive members $u,v$ for
	which the interval from $u$ to $v$ not containing $v_1$ contains a point of
	$C$.
	
	The pair $\langle u,v\rangle$ is unlinked with every member of
	$\mathscr M$.  Indeed, a pair linked with $\langle u,v\rangle$ would either
	be linked with one of $\langle v_1,u\rangle$ and
	$\langle v_1,v\rangle$, or would join $v_1$ to a point strictly between
	$u$ and $v$, contradicting the choice of consecutive neighbours.
	Maximality therefore gives
	\[
	\langle u,v\rangle\in\mathscr M.
	\]
	
	One common neighbour of $u$ and $v$ is $v_1$.  Let $z$ be the other.  Then
	$z$ lies in the interval from $u$ to $v$ not containing $v_1$, and
	$\langle v_1,z\rangle$ is not present.  Hence the diagonal exchange
	\[
	\langle u,v\rangle\longmapsto\langle v_1,z\rangle
	\]
	is defined.  It adds one internal pair incident with $v_1$ and removes none
	of the pairs already incident with $v_1$.
	
	Each repetition increases by one the number of internal pairs incident with $v_1$.  The fan contains exactly $r-3$ internal pairs incident with
	$v_1$, so the process terminates after
	\[
	r-3-d_{\mathscr M}(v_1)
	\]
	exchanges.  At termination every nonconsecutive vertex is joined to $v_1$,
	and the resulting family is therefore $\mathscr F_C(v_1)$.
\end{proof}

For $t\in\mathsf T$, put
\begin{equation}\label{eq:boundary-pair-defects}
 k_-(t):=\bigl|\mathscr E_0\setminus\mathscr E_t\bigr|,
 \qquad
 k_+(t):=\bigl|\mathscr E_t\setminus\mathscr E_0\bigr|.
\end{equation}
Let $N_{\min}(t)$ be the least number of elementary diagonal exchanges in
any sequence carrying the unmarked pattern $\mathscr E_t$ to
$\mathscr E_0$.  For the deterministic reduction below, let $N(t)$ denote
the number of exchanges in its fan stage; it does not include the flips used
to express the residual modular element.

\begin{theorem}[Deterministic Farey-boundary reduction and complexity]
	\label{thm:smashed-boundary-algorithm}
	Given a cyclic prefix table of size $s$ for $t\in\mathsf T$, the
	deterministic marked fan reduction has the following properties.
	\begin{enumerate}
		\item It computes flips $\psi_1,\ldots,\psi_{N(t)}$, each of the form
		$\phi_I$, $\phi_{UL}$ or $\phi_{UL^2}$ with $U$ admissible, and the reduced
		word in $S,L,L^2$ of an element $m_t\in\psl$ such that
		\begin{equation}\label{eq:smashed-boundary-algorithm-output}
			t=m_t\,\psi_{N(t)}^{-1}\cdots\psi_1^{-1}.
		\end{equation}
		After substituting the fixed flip words for $S,L,L^2$, the right-hand
		side is a word entirely in left and right boundary flips.
		\item The unmarked fan stage is within a factor two of an optimal
		elementary boundary reduction:
		\begin{equation}\label{eq:main-factor-two-bound}
			k_-(t)=k_+(t),\qquad
			k_-(t)\leq N_{\min}(t)\leq N(t)
			\leq k_-(t)+k_+(t)=2k_-(t).
		\end{equation}
		\item The complete output contains $O(s)$ flip factors, has explicitly
		encoded length $O(s^2)$, and is computed in $O(s^3)$ time under the
		computational convention above.
	\end{enumerate}
	The output word and the residual element depend only on $t$ and the fixed
	ordering conventions, not on the prefix table chosen to compute them.
\end{theorem}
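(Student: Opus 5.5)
The plan is to run the reduction on \emph{unmarked} patterns and keep track of a group element whose image of $\mathscr E_0$ is the current pattern. Exchanges are then converted into flips by pulling them back to $\mathscr E_0$ and applying \cref{lem:exchange-flip}.

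\textbf{Step 1 (defect and blocks).} From the table, use \cref{lem:effective-finite-support} to compute $\vec e_t$, $\mathscr E_t\setminus\mathscr E_0$ and $\mathscr E_0\setminus\mathscr E_t$. These sets have at most $n-1$ members each. Build the forest $\Gamma_{\mathscr E_t,\mathscr E_0}$ from the common-neighbour relation. Then use \cref{lem:finite-block-decomposition} to obtain the disagreement blocks $C_K$, and the equality $k_-(t)=k_+(t)=\sum_K(|C_K|-3)$.

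In each block, choose the base vertex $v_1$ deterministically. For instance, take the first pair of $\partial C_K$ in the order \eqref{eq:farey-edge-order}, oriented by its chosen label, and let $v_1$ be its first endpoint. Process the blocks in the order of their $\prec$-least boundary pair. Since these data depend only on $\mathscr E_t$ and $\vec e_t$, the final claim that the output is independent of the table will follow once every later step is also canonical.

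\textbf{Step 2 (counting).} In each block, apply \cref{lem:block-fan-reduction} twice: to $\mathscr E_t[C_K]$, giving a sequence from $\mathscr E_t$ to the fan, and to $\mathscr E_0[C_K]$, giving a sequence from $\mathscr E_0$ to the same fan. Reverse the second sequence; this is legitimate because $D_e$ is involutive at the level of patterns. Blocks are disjoint and each fixes its $\partial C$, so the concatenation over all blocks carries $\mathscr E_t$ to $\mathscr E_0$. It uses
\[
N(t)\le\sum_K 2(|C_K|-3)=k_-+k_+
\]
exchanges. For the lower bound, each exchange deletes exactly one pair. Every pair of $\mathscr E_t\setminus\mathscr E_0$ must be deleted at least once, so $N_{\min}\ge k_+$. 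This gives \eqref{eq:main-factor-two-bound}.

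\textbf{Step 3 (flips).} Set $g_0=t$, so that $\mathscr E_t=g_0(\mathscr E_0)$. If the $j$-th exchange acts at $e_j\in g_{j-1}(\mathscr E_0)$, then \cref{lem:exchange-flip}(1) gives
\[
D_{e_j}(g_{j-1}\mathscr E_0)=g_{j-1}D_{g_{j-1}^{-1}e_j}(\mathscr E_0).
\]
By part (2) this equals $g_{j-1}\psi_j(\mathscr E_0)$ with $\psi_j:=\Phi(g_{j-1}^{-1}e_j)$. Put $g_j=g_{j-1}\psi_j$. At the end, $g_N(\mathscr E_0)=\mathscr E_0$, so $g_N=:m_t\in\psl$ because the stabilizer of $\mathscr E_0$ is $\psl$. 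Hence $t=m_t\psi_N^{-1}\cdots\psi_1^{-1}$.

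The element $m_t$ is the unique modular element with $m_t(\vec e_0)=g_N(\vec e_0)$. Its reduced word is obtained by reading the Farey address of the oriented pair $g_N(\infty),g_N(0)$, using the continued-fraction coordinates. Substituting \eqref{eq:modular-generators-as-flips} then yields a word in flips only.

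\textbf{Step 4 (complexity), the main obstacle.} The factor count is immediate: $N\le 2k_-\le 2s$, and the reduced word of $m_t$ adds $O(s)$ letters. The delicate part is bounding the addresses $U$ of the $\psi_j$, and the cost of computing $g_{j-1}^{-1}e_j$.

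I would maintain $g_j^{-1}$ as a prefix table. Each exchange pulls back to a pair $\alpha_W$ with $W$ at most one or two letters beyond a prefix of the current table, so each table update changes $O(1)$ rows. Every vertex of every block is an endpoint of a pair in $\mathscr A(P)\cup t(\mathscr A(P))$ or in $\mathscr A(Q)$. It therefore lies at Farey depth $O(s)$, which bounds every $|U|$ by $O(s)$ and gives encoded length $O(s^2)$.

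The running time follows by charging $O(s^2)$ per step: $O(s)$ word comparisons or evaluations, each of length $O(s)$, done with bit-cost arithmetic on numerators and denominators of $O(s)$ bits. Over $O(s)$ steps this gives $O(s^3)$. Making this uniform depth bound rigorous, and showing that pulled-back addresses do not grow along the sequence, is where I expect most of the work to lie.
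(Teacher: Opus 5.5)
Your Steps 1--3 follow the paper's proof. You use the same defect computation and block decomposition, two fan reductions per block with the second one reversed, the recursion $g_j=g_{j-1}\Phi(g_{j-1}^{-1}e_j)$ via \cref{lem:exchange-flip}, and the stabilizer argument for $m_t$. These steps establish parts (1) and (2).

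\textbf{The gap is in part (3).}
\begin{itemize}
\item You bound the Farey depth of the block vertices, that is, of the endpoints of the exchanged pairs $e_j$, and conclude that every $|U|$ is $O(s)$. But the address of $\psi_j$ is that of the pulled-back pair $e_j^\circ=g_{j-1}^{-1}(e_j)$. This pair is obtained from $e_j$ by applying $t^{-1}$ and then $\psi_1^{-1},\ldots,\psi_{j-1}^{-1}$, so the depth of $e_j$ alone says nothing about it.
\item You call the bound $q=O(s)$ on the reduced word of $m_t$ immediate, but the depth of $g_N(\infty)$ and $g_N(0)$ is just as uncontrolled in your argument. Without this bound even the $O(s)$ factor count does not follow, because $q$ is not controlled by the defect: for $t\in\psl$ the defect is empty while $q$ is arbitrary.
\item Your table-maintenance idea does not close the gap as stated. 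You never bound the number of rows or the prefix lengths of the maintained table, and the claim that each update changes $O(1)$ rows is inaccurate: precomposing with $\psi_j$ rewrites every row below $U$.
\end{itemize}

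\textbf{The missing ingredient} is a linear address-growth estimate, which is the paper's \cref{lem:address-growth}. Let $\ell(x)$ be the least length of a binary word $u$ such that $u0^\infty$ or $u1^\infty$ represents $x$. Then:
\begin{itemize}
\item a flip raises $\ell$ by at most $2$;
\item a table map $h_{P,Q,\tau}$ raises $\ell$ by at most $\max_i|q_i|\le s$;
\item for admissible $W$ with binary code $w$, one has $|w|=\max\{\ell(W(0)),\ell(W(\infty))\}$, and $W$ has at most $2|w|-1$ letters in $S,L,L^2$.
\end{itemize}
You do not need pulled-back addresses to stop growing, as you suggest; they may grow, but by at most $2$ per flip. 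The endpoints of each $e_j$ have $\ell=O(s)$, and at most $N\le 2s$ flips are applied, so every $U$ has length $O(s)$. The same estimate applied to $m_t(\vec e_0)=t\psi_1\cdots\psi_N(\vec e_0)$ gives $q\le 2s+4N+1$, as in \cref{lem:residual-modular-length}. These two bounds yield the factor count, the $O(s^2)$ encoded length, and the $O(s^3)$ pull-back cost.

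Your table approach can alternatively be completed, but you would have to prove it. Every exchanged $e_j$ lies outside $\mathscr E_0$, so $e_j^\circ$ is an internal node of the current source tree. Precomposition is then a rotation that preserves the row count, with a base flip adding at most two rows, and this bounds all depths by $O(s)$.

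\textbf{A minor point on determinism.} Keying blocks by the $\prec$-least pair of $\partial C_K$ can produce ties, since adjacent blocks can share a boundary pair; for instance, two blocks can both contain $\alpha_I$. The processing order changes the pulled-back flips, so you need a tie-free key, such as the paper's $\prec$-least member of $\mathscr E_0[C]$.
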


The factor bound counts generators in the address-indexed
flip family.  It is distinct from a word-length bound in a
fixed finite generating set; the encoded-length bound also
charges for writing each supporting address.

\begin{proof}
	\emph{Disagreement blocks.}  By \cref{lem:effective-finite-support}, compute
	$\mathscr E_t\setminus\mathscr E_0$ and $\mathscr E_0\setminus\mathscr E_t$.
	For $e\in\mathscr E_t\setminus\mathscr E_0$, its two common neighbours are
	obtained by applying $t$ to the two standard common neighbours of
	$t^{-1}(e)\in\mathscr E_0$.  Hence \cref{lem:finite-block-decomposition}
	computes the disagreement blocks.  Order the blocks by the $\prec$-least member of
	$\mathscr E_0[C]$. These keys are distinct, since the nonempty
	sets $\mathscr E_0[C]$ partition
	$\mathscr E_0\setminus\mathscr E_t$.
	For each block, give its $\prec$-least boundary pair its
	preferred orientation and take its initial endpoint as the fan base. In the
	block fan reduction, whenever there is more than one possible gap between
	consecutive neighbours of the base point, choose the first gap in the cyclic
	order beginning with the preferred boundary pair.  These conventions make every
	choice depend only on $\mathscr E_t$ and the fixed order
	\eqref{eq:farey-edge-order}.
	
	\emph{Fan stage.}  In each block $C$, first use the block fan reduction to
	carry the restriction of $\mathscr E_t$ to the chosen fan.  Apply the same
	construction to the restriction of $\mathscr E_0$, and reverse this second
	sequence.  Every exchange fixes $\partial C$, so it changes no other block and
	no pair outside $C$.  Processing all blocks gives pairs $e_1,\ldots,e_N$, where
	$N=N(t)$, and Farey-type boundary patterns
	\[
	\mathscr E^{(0)}=\mathscr E_t,\qquad
	\mathscr E^{(j)}=D_{e_j}\bigl(\mathscr E^{(j-1)}\bigr)
	\quad(e_j\in\mathscr E^{(j-1)}),\qquad
	\mathscr E^{(N)}=\mathscr E_0.
	\]
	
	\emph{Flips.}  Put $\theta_0:=t$, so that $\theta_0(\mathscr E_0)=\mathscr E^{(0)}$.
	If $\theta_{j-1}\in\mathsf T$ satisfies
	$\theta_{j-1}(\mathscr E_0)=\mathscr E^{(j-1)}$, put
	\[
	e_j^\circ:=\theta_{j-1}^{-1}(e_j)\in\mathscr E_0,\qquad
	\psi_j:=\Phi(e_j^\circ),\qquad
	\theta_j:=\theta_{j-1}\psi_j.
	\]
	By \cref{lem:exchange-flip},
	\[
	\theta_j(\mathscr E_0)
	=\theta_{j-1}\bigl(D_{e_j^\circ}(\mathscr E_0)\bigr)
	=D_{e_j}\bigl(\theta_{j-1}(\mathscr E_0)\bigr)
	=\mathscr E^{(j)}.
	\]
	The endpoints of $e_j^\circ$ are obtained by applying the table of
	$t^{-1}$ and then $\psi_1^{-1},\ldots,\psi_{j-1}^{-1}$ to the
	endpoints of $e_j$.  If $e_j^\circ=\alpha_I$, then $\psi_j=\phi_I$.
	
	Otherwise, write $e_j^\circ=\langle x,y\rangle$.  Each of
	$\bar\kappa(x)$ and $\bar\kappa(y)$ has two eventually constant
	binary representatives.  Among the four choices of one
	representative for each endpoint, choose the pair whose longest
	common prefix has greatest length, and denote that prefix by $w$.
	This maximizing pair is unique.  Indeed, if $W$ is the unique
	admissible prefix with $\alpha_W=e_j^\circ$, and $w_W$ is its
	binary code, the two representatives lying in its cylinder are
	$w_W0^\infty$ and $w_W1^\infty$.  Their longest common prefix is
	$w_W$, whereas every other choice gives a shorter common prefix.
	Thus $w=w_W$.
	
	Translate $w$ into the admissible Farey prefix $W$.  Since
	$e_j^\circ\neq\alpha_I$, uniquely write $W=UB$ or $W=UA$.
	Then \cref{lem:exchange-flip} gives
	\[
	\psi_j=
	\begin{cases}
		\phi_{UL},& W=UB,\\
		\phi_{UL^2},& W=UA.
	\end{cases}
	\]
	This explicitly computes the previously defined
	$\psi_j=\Phi(e_j^\circ)$.
		
	\emph{Residual element.}  The element
	$m_t:=\theta_N=t\psi_1\cdots\psi_N$ satisfies $m_t(\mathscr E_0)=\mathscr E_0$.
	Since the stabilizer of $\mathscr E_0$ in $\mathsf T$ is $\psl$, we have
	$m_t\in\psl$.  Compute $m_t(\vec e_0)=(m_t(\infty),m_t(0))$ by applying
	$\psi_N,\ldots,\psi_1$ and then the table of $t$ to $\infty$ and $0$.  Write
	$m_t(\vec e_0)=\left(\frac ac,\frac bd\right)$, choosing primitive integer
	representatives with $ad-bc=1$.  The matrix
	\[
	M=\begin{pmatrix}a&b\\ c&d\end{pmatrix}\in\sll
	\]
	sends $\vec e_0=(\infty,0)$ to $m_t(\vec e_0)$, so, by simple transitivity of
	the $\psl$-action on oriented Farey edges, its class in $\psl$ is $m_t$.
	Applying the Euclidean algorithm to $M$, followed by reduction using
	$\psl=\langle S,L\mid S^2=L^3=1\rangle$, produces the reduced free-product
	normal form of $m_t$.  Finally, $t\psi_1\cdots\psi_N=m_t$ gives
	\eqref{eq:smashed-boundary-algorithm-output}, and
	\eqref{eq:modular-generators-as-flips} expresses it entirely in boundary flips.
	
	For a disagreement block $C$ with $r$ points, the two
	fan reductions together use at most $2(r-3)$ exchanges.
	By \eqref{eq:defect-cardinalities-equal},
	\[
	N(t)\leq2\sum_C(|C|-3)=2k_-(t),
	\qquad k_-(t)=k_+(t).
	\]
	Every missing standard pair must be inserted, and each
	exchange inserts only one pair. Hence
	$k_-(t)\leq N_{\min}(t)\leq N(t)$, proving
	\eqref{eq:main-factor-two-bound}.
	
	The fixed ordering conventions determine the exchanges
	from $\mathscr E_t$. Starting with $\theta_0=t$, the
	recursion then determines every $\psi_j$ and $m_t$.
	Thus the output is independent of the input table.
	
	The complete output and running-time bounds follow from
	\eqref{eq:complete-bound-lambda},
	\cref{lem:residual-modular-length,cor:T-input-complexity}
	below.
\end{proof}

\begin{remark}[Alternative linked-pair insertion]
	One may instead insert missing standard pairs successively.
	Give a missing pair $\alpha$ its preferred orientation
	$(x,y)$. Among the pairs $\langle u,v\rangle\in\mathscr E$
	linked with $\alpha$, with $u\in(x,y)$ and $v\in(y,x)$,
	choose the least $u$ and, among ties, the greatest $v$.
	Any member of $\mathscr E$ linked with $\langle x,u\rangle$
	or $\langle x,v\rangle$ would have its first endpoint
	before $u$, or share $u$ and have its second endpoint
	after $v$, contradicting this choice.
	Maximality therefore makes $x$ a common neighbour of
	$u,v$. If $z$ is their other common neighbour, exchange
	$\langle u,v\rangle$ for $\langle x,z\rangle$.
	This decreases the finite number of pairs linked with
	$\alpha$ by one; repetition inserts $\alpha$ by maximality.
	No standard pair is removed, and the number of nonstandard
	pairs never increases. Starting from $\mathscr E_t$, the
	procedure therefore uses at most $k_-(t)k_+(t)$ exchanges.
\end{remark}

We next include the marking in the estimate.  Let
\[
m_t=t\psi_1\cdots\psi_{N(t)}\in\psl
\]
be the residual modular element of \cref{thm:smashed-boundary-algorithm}, and
write its reduced word in $S$, $L$ and $L^{2}$ as
\begin{equation}\label{eq:residual-modular-normal-form}
	m_t=u_1\cdots u_q,
	\qquad u_j\in\{S,L,L^2\}.
\end{equation}

Let $q_S(t)$ be the number of letters $S$ in this word and let $q_3(t)$ be
the number of letters $L$ or $L^2$; thus
$q=q_S(t)+q_3(t)$.  Here and below, the length of an output word counts each flip $\phi_I^{\pm1}$, $\phi_{UL}$ or $\phi_{UL^2}$ as one factor; in particular, $\phi_I^{-1}=\phi_I^3$ counts as one factor.

Let $\Lambda_{\mathsf T}(t)$ count the flip factors after
substituting the modular-generator words.
Each inverse fan-stage flip contributes one factor,
while \eqref{eq:modular-generators-as-flips} contributes
two for $S$ and five for $L$ or $L^2$. Hence
\begin{equation}\label{eq:complete-bound-lambda}
	\Lambda_{\mathsf T}(t)
	\leq N(t)+2q_S(t)+5q_3(t)
	\leq k_-(t)+k_+(t)+5q,
\end{equation}
where the second inequality uses
\eqref{eq:main-factor-two-bound}.

%
%

For $x\in\mathscr R$ let $\ell(x)$ be the least length of a binary word $u$ such
that $u0^\infty$ or $u1^\infty$ represents $\bar\kappa(x)$.

\begin{lemma}[Address growth]
	\label{lem:address-growth}
	Let $x\in\mathscr R$.
	\begin{enumerate}
		\item If $\phi$ is a flip, then $\ell(\phi(x))\leq\ell(x)+2$.
		\item If $h=h_{P,Q,\tau} \in \T$, then $\ell(h(x))\leq\ell(x)+\max_i|q_i|$.
		\item If $W$ is an admissible prefix with binary code $w$, then the
		larger of $\ell(W(0))$ and $\ell(W(\infty))$ equals $|w|$, and the
		reduced word of $W$ in $S,L,L^2$ has at most $2|w|-1$ letters.
	\end{enumerate}
\end{lemma}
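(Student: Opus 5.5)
Everything will be done in binary coordinates via $\kappa$, since flips and prefix tables are there explicit prefix replacements. For $x\in\mathscr R$, fix a shortest $u$ with $\bar\kappa(x)$ represented by $u0^\infty$ or $u1^\infty$; then $x$ has eventually constant representatives $u c\,\bar c^{\infty}$-type neighbours, and in particular both representatives $v0^\infty, v'1^\infty$ of the dyadic point have the form (prefix of length $\le |u|+1$) followed by a constant tail. (When $u$ is shortest, the other representative is $u'c^\infty$ with $|u'|\le|u|$ obtained by the standard carry, so $\ell$ is controlled by either representative up to the minimum.)

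\emph{Part (2).} Take the representative $\xi=u c^\infty$ with $|u|=\ell(x)$. If $|u|\ge|p_i|$ for the unique $p_i$ prefixing $\xi$, write $\xi=p_i\eta$ with $\eta=u''c^\infty$, $|u''|=|u|-|p_i|$; then $h(\xi)=q_{\tau(i)}u''c^\infty$, giving $\ell\le |q_{\tau(i)}|+\ell(x)-|p_i|$. If $|u|<|p_i|$, then $p_i=u c^{j}$ for some $j$, so $\xi=p_i c^\infty$ and $h(\xi)=q_{\tau(i)}c^\infty$, whose $\ell$ is at most $|q_{\tau(i)}|$. In both cases $\ell(h(x))\le\ell(x)+\max_i|q_i|$. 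Since $h$ preserves $\sim_{\mathrm{bin}}$ (cyclic order), the image class is well defined and the bound holds for $\ell$ of the class.

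\emph{Part (1).} A flip $\phi_{WL}$ or $\phi_{WL^2}$ has a prefix table in which each replaced prefix $wab$ or $wa$ is changed by adding or removing at most one letter, while points outside $w\mathcal B$ are fixed; $\phi_I$ replaces length-2 prefixes by length-2 prefixes. Repeating the argument of (2) but tracking the difference $|q|-|p|$ instead of $|q|$: in the first case the growth is $|q_{\tau(i)}|-|p_i|\le1$; in the second case ($\xi=p_ic^\infty$ with $|u|<|p_i|$) the image is $q c^\infty$ with $|q|\le |p_i|+1\le |u|+2$, since $|p_i|\le |w|+2$ and $u$ must be a prefix of length at least $|w|$ unless $x$ lies outside or at an endpoint of the support (handled as fixed or via the $\phi_I$ case). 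This gives the constant $2$; the main care is this boundary case where the shortest address of $x$ is shorter than the support prefix, which I would check by listing the three (resp.\ four) rows.

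\emph{Part (3).} $W(0)$ and $W(\infty)$ are the endpoints of the arc $I(W)$, whose binary representatives in the cylinder are $w0^\infty$ and $w1^\infty$; one of them has shortest address exactly $w$ (the endpoint not shared with the parent arc, i.e.\ the new Farey point created at depth $|w|$), while the other endpoint is inherited from an ancestor and has $\ell<|w|$, giving the maximum $|w|$. For the word length, $W=S^\delta C_1\cdots C_k$ with $|w|=k+1$ and each $C_j\in\{LS,L^2S\}$ contributes two letters, giving at most $\delta+2k\le 2|w|-1$ letters after free reduction.
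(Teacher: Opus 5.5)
Your proposal follows the paper's proof essentially step for step: binary coordinates via $\kappa$, the case split according to whether the shortest address $u$ is at least as long as the replaced prefix $p_i$ (in (2) this is exactly the paper's $|v|=\max\{|u|-|p_i|,0\}$), the observation in (3) that exactly one of $w0^\infty$, $w1^\infty$ has $\ell=|w|$ while the other comes from a shorter prefix, and the count $\delta+2k\le 2|w|-1$.

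There is one off-by-one slip, in the boundary case of (1). From $|p_i|\le|w|+2$ and $|u|\ge|w|$ you only get $|q|\le|u|+3$. Once endpoints are excluded you in fact have $|u|\ge|w|+1$: if $|u|\le|w|$ and $uc^\infty\in w\mathcal B$, then $uc^\infty=wc^\infty$, and both flips at $W$ fix this point. That gives $|p_i|\le|u|+1$, and hence $|q|\le|u|+2$. Alternatively, carry out the row check you deferred, as the paper does. In this case only $w10\,0^\infty$ (for $\phi_{WL}$) and $w01\,1^\infty$ (for $\phi_{WL^2}$) move, and each has $\ell$ increasing by exactly one.
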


\begin{proof}
	Let $ud^\infty$, with $d\in\{0,1\}$ and $|u|=\ell(x)$, represent $x$.
	
	(1) In binary coordinates, $\phi_{WL}$ acts on $w\mathcal B$ by
	$w0\xi\mapsto w00\xi$, $w10\xi\mapsto w01\xi$, $w11\xi\mapsto w1\xi$, and
	$\phi_{WL^2}$ is its inverse; both fix the complement of $w\mathcal B$.
	Thus a prefix $p$ is replaced by a prefix $q$ with $|q|\leq|p|+1$.  If
	$|u|\geq|p|$, the image of $ud^\infty$ has a representative $vd^\infty$ with
	$|v|\leq|u|+1$.  If $|u|<|p|$, then $p=ud^{\,|p|-|u|}$, and $ud^\infty$ is
	fixed except for $w10\,0^\infty\mapsto w01\,0^\infty$ and
	$w01\,1^\infty\mapsto w10\,1^\infty$, where $\ell$ grows by one.  The base
	flip $\phi_I^{\pm1}$ replaces prefixes of length $2$ by prefixes of length
	$2$, so $\ell$ grows by at most two.
	
	(2) If $ud^\infty\in p_i\mathcal B$, then $h(ud^\infty)=q_{\tau(i)}vd^\infty$
	with $|v|=\max\{|u|-|p_i|,0\}$.
	
	(3) Write $W=S^\delta C_1\cdots C_k$ with $C_i\in\{A,B\}$, so that
	$|w|=k+1$.  The points $W(0)$ and $W(\infty)$ are represented by $w0^\infty$
	and $w1^\infty$.  Hence $\ell(W(0))$ and $\ell(W(\infty))$ are the lengths of
	$w$ with its terminal $0$'s, respectively $1$'s, removed, and one of them is
	$|w|$.  The reduced word $S^\delta L^{i_1}S\cdots L^{i_k}S$ has $\delta+2k\leq2|w|-1$ letters.
\end{proof}

\begin{lemma}[Residual modular length]
	\label{lem:residual-modular-length}
	If the input prefix table has size $s=s(P,Q)$, then
	\[
	q\leq 2s+4N(t)+1.
	\]
\end{lemma}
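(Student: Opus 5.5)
We bound the length of the reduced word of $m_t$ by comparing it with the binary address length of the oriented edge $m_t(\vec e_0)=(m_t(\infty),m_t(0))$. Since $m_t\in\psl$ maps $\vec e_0$ to the standard oriented edge $\alpha_{W}$ or its reverse for some admissible prefix $W$, we can write $m_t=W$ or $m_t=WS$ (the decomposition used in the proof of \cref{lem:exchange-flip}(2)). Then $q\leq(\text{reduced length of }W)+1$. By \cref{lem:address-growth}(3), the reduced word of $W$ has at most $2|w|-1$ letters, and $|w|=\max\{\ell(W(0)),\ell(W(\infty))\}$. Hence
\[
q\leq 2\max\{\ell(m_t(0)),\ell(m_t(\infty))\}+1 ,
\]
using $\{W(0),W(\infty)\}=\{m_t(0),m_t(\infty)\}$. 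One must check that appending $S$ does not cause cancellation that increases the length. It does not: the word can only shorten, and reduced words of the form $W$ or $WS$ are already in normal form. The degenerate case $W\in\{I,S\}$ gives $q\leq 1$.

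It remains to bound $\ell(m_t(x))$ for $x\in\{0,\infty\}$. By definition $m_t=t\psi_1\cdots\psi_N$, so $m_t(x)=t(\psi_1(\cdots\psi_N(x)))$. We have $\ell(0)=\ell(\infty)=1$, or even $0$ depending on representatives: $0$ and $\infty$ are represented by $0^\infty$, $1^\infty$ or $01^\infty$, $10^\infty$, so $\ell\leq1$. Applying \cref{lem:address-growth}(1) $N$ times gives $\ell(\psi_1\cdots\psi_N(x))\leq 1+2N$. Then \cref{lem:address-growth}(2), applied to the cyclic table of $t$, adds at most $\max_i|q_i|\leq s$. So $\ell(m_t(x))\leq s+2N+1$, and therefore
\[
q\leq 2(s+2N+1)+1=2s+4N+3 .
\]

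This is off from the claimed bound by an additive constant, so the main obstacle is tightening the constants. I would first use $\ell(\infty)=0$ via the representative $1^\infty$ with empty $u$, and $\ell(0)=0$ via $0^\infty$. Both are legitimate, since $\infty$ and $0$ correspond to $0^\infty\sim1^\infty$ and to $01^\infty\sim10^\infty$. The first is genuinely length $0$. For $0$, if the length is $1$, I would exploit the slack elsewhere: in the case $m_t=WS$ the reduced length of $W$ is at most $2|w|-1$, giving $q\leq 2|w|$, and in the case $m_t=W$ we get $q\leq2|w|-1$. So $q\leq 2\max\ell$ in all cases. With $\ell(x)\leq 1$ initially, this yields $q\leq 2(s+2N)+\,$small. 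Carefully using $\ell\leq 0$ for one endpoint and $|q_i|\leq s-1$ (since $s\geq n+|q_i|$ with $n\geq1$) should recover $q\leq 2s+4N+1$. If the bookkeeping still leaves a constant, I would absorb it using $s\geq n\geq 2$ in the nontrivial case, noting that $t=1$ gives $N=0$ and $q=0$ directly.
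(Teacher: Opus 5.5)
Your route is the paper's: write $m_t\in\{W,WS\}$ with $W$ admissible, bound $q$ through \cref{lem:address-growth}(3) by $|w|=\max\{\ell(m_t(0)),\ell(m_t(\infty))\}$, and bound these address lengths with \cref{lem:address-growth}(1),(2). As submitted, however, the proposal does not prove the stated inequality. You reach $2s+4N+3$ (which should read $2s+4N+2$, since $(2|w|-1)+1=2|w|$), and then leave the remaining constant to bookkeeping that ``should'' work.

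The missing observation is one you half-state (``the word can only shorten'') and then do not use. A nonempty admissible prefix $S^\delta C_1\cdots C_k$ ends in the letter $S$. So the reduced word of $WS$ is $W$ with its final $S$ deleted, and for $W=I$ one has $WS=S$ with $|w|=1$. Hence $q\le 2|w|-1$ in both cases, not merely $q\le 2|w|$. Your assertion that ``$WS$ is already in normal form'' is false. With $\ell(0)=0$ and $\ell(\infty)=1$, the $N$ flips add at most $2N$ and $t$ adds at most $\max_i|q_i|\le s$. This gives $|w|\le s+2N+1$ and so $q\le 2s+4N+1$ exactly, which is the paper's argument.

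Your claim that both $\ell(0)$ and $\ell(\infty)$ vanish is also wrong, and you have swapped the two points. $0^\infty\sim1^\infty$ is a single point, namely $0$ (the ends $B^\infty\sim SA^\infty$). The point $\infty$ corresponds to $01^\infty\sim10^\infty$, so $\ell(\infty)=1$. Lowering $\ell$ at one endpoint cannot help anyway, because $|w|$ is a maximum over both endpoints.

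If you prefer to keep the weaker estimate $q\le 2|w|$, the slack you mention does close the argument, but you must use it explicitly. From $s\ge n+|q_i|$ and $n\ge 1$ you get $\max_i|q_i|\le s-1$. Then $|w|\le 1+2N+(s-1)$, and $q\le 2s+4N$. Either repair is one line, but without it the constant $2s+4N+1$ is not established.
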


\begin{proof}
	We have $\ell(\infty)=1$ and $\ell(0)=0$.  By
	\cref{lem:address-growth}(1),(2), the endpoints of
	$m_t(\vec e_0)=t\psi_1\cdots\psi_{N(t)}(\vec e_0)$ satisfy
	$\ell\leq s+2N(t)+1$.  Write $m_t\in\{W,WS\}$ with $W$ admissible, as in the
	proof of \cref{lem:exchange-flip}; then
	$\alpha_W=\langle m_t(0),m_t(\infty)\rangle$.  By
	\cref{lem:address-growth}(3), the binary code $w$ of $W$ satisfies
	$|w|\leq s+2N(t)+1$, and $W$ has at most $2|w|-1$ letters.  The same bound
	holds for $WS$, which is obtained by deleting the final $S$ of $W$, or equals
	$S$ if $W=I$.  Hence $q\leq2|w|-1\leq2s+4N(t)+1$.
\end{proof}

\begin{corollary}[Input-size complexity]
	\label{cor:T-input-complexity}
	For a cyclic prefix table of size $s$, the deterministic marked reduction
	produces $O(s)$ flip factors.  Its explicitly encoded output has length
	$O(s^2)$, and a direct implementation on finite $A$--$B$ prefix words has
	running time $O(s^3)$.
\end{corollary}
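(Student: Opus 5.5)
We will combine the counting bounds already established with a cost analysis of each stage of the algorithm in \cref{thm:smashed-boundary-algorithm}.

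\emph{Factor count.} By \eqref{eq:main-factor-two-bound} and \cref{lem:effective-finite-support}, we have $N(t)\leq 2k_-(t)\leq 2(n-1)\leq 2s$. By \cref{lem:residual-modular-length}, $q\leq 2s+4N(t)+1\leq 10s+1$. Substituting into \eqref{eq:complete-bound-lambda} gives
\[
\Lambda_{\mathsf T}(t)\leq N(t)+5q=O(s),
\]
so the output has $O(s)$ flip factors.

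\emph{Encoded length.} It remains to bound the supporting address of each flip $\psi_j$. The endpoints of $e_j^\circ$ are obtained from endpoints of pairs in $\mathscr E_t$ by applying the table of $t^{-1}$ and then at most $N$ flips. Endpoints of pairs in $\mathscr E_t\setminus\mathscr E_0$ are $t$-images of endpoints of $\alpha_{\widehat u}$ with $u\in\operatorname{Int}^*(P)$. Their common neighbours likewise come from applying $t$ to standard points of address length at most $\max|p_i|+1$. Hence by \cref{lem:address-growth}(2) and (1), every endpoint $x$ encountered satisfies $\ell(x)\leq O(s)+2N=O(s)$. By \cref{lem:address-growth}(3), the address $W$ has $|w|=\max\ell$ of the endpoints of $e_j^\circ$, which is $O(s)$. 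Thus each of the $O(s)$ fan-stage factors is written with $O(s)$ symbols, and the residual part consists of $O(s)$ fixed-address factors. The total encoded length is therefore $O(s^2)$.

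\emph{Running time.} We charge each step at the cost of manipulating $O(s)$-length binary words.
\begin{enumerate}
\item Computing the defect via \cref{lem:effective-finite-support} requires $O(n)$ pairs, each evaluated by matching a prefix against the table in $O(s)$ time, for $O(s^2)$ in total.
\item Building the forest and the blocks, and sorting the keys under $\prec$, involves $O(s)$ pairs with $O(s)$-length comparisons. Using a linear scan per exchange in the fan reductions keeps this at $O(s^2)$ per block-size sum, which is at most $O(s^3)$.
\item The dominant step is computing $e_j^\circ$. Pulling back each of the $N=O(s)$ pairs through $t^{-1}$ and up to $j-1$ flips, each flip costing $O(s)$ on an $O(s)$-length representative, costs $O(s^2)$ per pair and $O(s^3)$ overall.
\item The marked edge $m_t(\vec e_0)$ is obtained similarly in $O(s^2)$ time. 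Converting it directly to the reduced word via \cref{lem:address-growth}(3), by reading $W$ off the binary code $w$ rather than through integer Euclid, costs $O(s)$.
\end{enumerate}

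The main obstacle is step (3): one must ensure that the representatives do not grow beyond $O(s)$ across the iterated pullbacks. This is exactly what the additive bound $+2$ per flip in \cref{lem:address-growth}(1) guarantees.
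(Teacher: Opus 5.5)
Your proposal follows the paper's proof essentially step for step: the same chain $N(t)\le 2s$, $q\le 10s+1$, $\Lambda_{\mathsf T}(t)\le N(t)+5q=O(s)$, the same address-growth argument giving $O(s)$ supporting addresses and hence encoded length $O(s^2)$, and the same $O(N(t)^2s)=O(s^3)$ dominant cost for the pull-backs $e_j^\circ$. You differ only in two implementation details, neither of which affects the final bound.

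First, your $O(s^2)$ figure for the defect computation tacitly does the Farey-membership and common-neighbour tests in binary coordinates. That works, in linear time per pair, but you should say so. The paper instead uses integer representatives at $O(s^2)$ per test, hence $O(s^3)$ for that step.

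Second, you read the residual word directly off the binary code of $W$ rather than running the Euclidean algorithm on $M$. This is legitimate provided you also use the orientation of $m_t(\vec e_0)$ to decide between $m_t=W$ and $m_t=WS$.
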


\begin{proof}
	By \eqref{eq:defect-table-bound} and
	\eqref{eq:main-factor-two-bound},
	$N(t)\leq k_-(t)+k_+(t)\leq2s$.
	By \cref{lem:residual-modular-length}, $q\leq10s+1$.
	Hence \eqref{eq:complete-bound-lambda} yields
	\begin{equation}\label{eq:complete-linear-table-bound}
		\Lambda_{\mathsf T}(t)\leq N(t)+5q=O(s).
	\end{equation}
	
	Every point handled during the computation has $\ell=O(s)$.  Indeed, the
	endpoints of the pairs in $\mathscr A(P)\cup\mathscr A(Q)$ from the proof of \cref{lem:effective-finite-support} and their standard common neighbours have $\ell\leq s+1$; by \cref{lem:address-growth}, applying
	$t^{\pm1}$ adds at most $s$, each flip adds at most $2$, and at most
	$N(t)\leq2s$ flips are applied to any point.  By
	\cref{lem:address-growth}(3), every $\psi_j$ has a supporting address of
	length $O(s)$, while the modular part uses only $\phi_I$, $\phi_{SL}$ and
	$\phi_{SL^2}$.  Hence the encoded output length is
	\begin{equation}\label{eq:T-encoded-output-bound}
		O\bigl(N(t)\,s+q\bigr)=O(s^2).
	\end{equation}
	
	The initial finite-defect computation also takes $O(s^3)$
	time. There are $O(s)$ candidate pairs in $\mathscr A(P)\cup\mathscr A(Q)$, and their endpoints
	and required standard common neighbours have addresses
	of length $O(s)$ and integer coordinates of $O(s)$ bits.
	Each required conversion, Farey-condition test, and
	standard common-neighbour computation takes $O(s^2)$
	time with schoolbook arithmetic.
	
	For the running time, there are $O(s)$ such pairs, defect pairs and
	block incidences, each described by words of length $O(s)$.  Constructing
	and ordering the incidence forest uses $O(s^2)$ comparisons of such words,
	hence $O(s^3)$ time.  The fan stage performs $O(s)$ exchanges, each scanning
	$O(s)$ pairs of its block, which costs $O(s^3)$.  Computing $e_j^\circ$
	applies $t^{-1}$ and at most $j-1$ flips to two points with $\ell=O(s)$,
	each application costing $O(s)$; all pull-backs cost
	$O(N(t)^2s)=O(s^3)$.  Computing $m_t(\vec e_0)$ costs $O(N(t)s)$.  Each turn
	at most doubles the entries of the corresponding matrix, so the Euclidean
	reduction involves integers of $O(s)$ bits and costs $O(s^2)$ with
	schoolbook arithmetic; writing the output costs $O(s^2)$.  Thus the total
	running time is $O(s^3)$, without a unit-cost assumption on long addresses
	or rational boundary points.
\end{proof}

The term involving $q$ in \eqref{eq:complete-bound-lambda} is necessary:
for $t\in\psl$ the boundary defect vanishes, whereas the reduced modular
length of $t$ is unbounded.

\begin{example}\label{ex:marked-fan-reduction}
	Consider the cyclic prefix table
	\[
	\begin{array}{c|ccccc}
		\text{source}&000&0010&0011&01&1\\ \hline
		\text{target}&100&101&11&00&01
	\end{array}
	\]
	for $t=\phi_{SL}L$.  Its boundary defect is
	\[
	\mathscr E_t\setminus\mathscr E_0
	=\{\langle-2,0\rangle\},
	\qquad
	\mathscr E_0\setminus\mathscr E_t
	=\{\langle-1,\infty\rangle\}.
	\]
	There is one disagreement block, with cyclic list
	$(\infty,-2,-1,0)$. 
	Its $\prec$-least boundary pair is $\langle\infty,0\rangle$, with preferred orientation $(\infty,0)$, so the fan base is $\infty$.
	The fan stage consists of the single exchange
	\[
	e_1=\langle-2,0\rangle
	\longmapsto\langle-1,\infty\rangle.
	\]
	
	To extract its flip, pull back the exchanged pair:
	\[
	e_1^\circ=t^{-1}(e_1)
	=\left\langle\frac12,1\right\rangle
	=\alpha_{BA}.
	\]
	Thus $\psi_1=\phi_{BL^2}$ and
	\[
	m_t=t\psi_1=L.
	\]
	The algorithm therefore returns
	\[
	t=L\phi_{BL}.
	\]
	Substituting \eqref{eq:modular-generators-as-flips} gives a
	word entirely in boundary flips.  In particular, the flip
	recorded by the algorithm is determined by the pulled-back
	pair $e_1^\circ$, not by interpreting the diagonal exchange
	itself as a fixed boundary homeomorphism.
\end{example}

\subsection{Elements of \texorpdfstring{$\mathsf V$}{V} as sequences of
flips and shuffles}
For a cyclic prefix-replacement map $g\in\mathsf T$, let $\mathcal A_{\mathsf T}(g)$ denote the flip word returned by
\cref{thm:smashed-boundary-algorithm}.  The construction for $\mathsf V$
below is an extension of that boundary algorithm rather than a second
boundary-reduction procedure: the cyclic factors are handled by
$\mathcal A_{\mathsf T}$, and only the remaining finite cylinder permutation
is treated by standard adjacent-transposition sorting.

\begin{theorem}[Flip--shuffle reduction]
\label{prop:V-effective-algorithm}
Given a finite prefix table for $v\in\mathsf V$, there is a finite algorithm which returns
a finite word in flips and the basic shuffle $\sigma_S$ representing $v$.
Apart from finite permutation sorting and word operations, it invokes
$\mathcal A_{\mathsf T}$ three times.
\end{theorem}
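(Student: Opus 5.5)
We factor $v$ as a product of elements of $\mathsf T$ and conjugates of $\sigma_S$, and we arrange that only three elements of $\mathsf T$ are passed to $\mathcal A_{\mathsf T}$.

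\emph{Step 1: normalize the codes.} Write $v=h_{P,Q,\tau}$ with $P=\{p_1<\cdots<p_n\}$ and $Q=\{q_1<\cdots<q_n\}$. If $n=1$ then $v=1$, and if $n=2$ then $v$ is either $1$ or $Y=\phi_I^2$. So assume $n\geq3$. Let $C_n=\{0,10,110,\ldots,1^{n-2}0,1^{n-1}\}$ be the right vine code, ordered lexicographically as $c_1<\cdots<c_n$. Define
\[
a:=h_{P,C_n,\id}\in\mathsf F\subset\mathsf T,\qquad b:=h_{Q,C_n,\id}\in\mathsf T.
\]
Then $v=b^{-1}\,\pi\, a$, where $\pi:=h_{C_n,C_n,\tau}$ permutes the cylinders of the vine. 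The prefix tables of $a$ and $b$ are read off from the input, and their sizes are bounded by $s(P,Q)+O(n^2)$.

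\emph{Step 2: sort the permutation.} Write $\tau$ as a product of adjacent transpositions $(i\ i{+}1)$ using bubble sort, which needs at most $\binom n2$ factors. This is the ``finite permutation sorting'' permitted in the statement.

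\emph{Step 3: realize each transposition by a shuffle.}
\begin{itemize}
\item For $i\leq n-2$, the transposition swapping $c_i=1^{i-1}0$ and $c_{i+1}=1^{i}0$ is not itself a shuffle. Instead, use the conjugate $r^{i-1}\rho\,\sigma_S\,\rho^{-1}r^{-(i-1)}$, where $r\in\mathsf T$ is the rotation of $C_n$ sending $c_j\mapsto c_{j+1}$ (indices mod $n$), and $\rho\in\mathsf T$ is a fixed cyclic table carrying $c_1,c_2$ onto $10,11$ inside the vine. Concretely, $\rho=h_{C_n,C',\text{shift}}$ with $C'$ a code containing $10$ and $11$ as consecutive words.
\item The last transposition $(n{-}1\ n)$ swaps $1^{n-2}0$ and $1^{n-1}$. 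These are the two children of $1^{n-2}$, so it is the shuffle $\sigma_{\widehat{1^{n-2}}}$, which the transport relations \eqref{eq:mixed-flip-transport-B}--\eqref{eq:mixed-flip-transport-AA} conjugate back to $\sigma_S$ by explicit flips.
\end{itemize}

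\emph{Collapsing to three calls.} The obvious way to satisfy the count of three is to avoid calling $\mathcal A_{\mathsf T}$ on each conjugator separately. Use a single conjugator $c\in\mathsf T$ carrying the two cylinders $10$ and $11$ onto $1^{n-2}0$ and $1^{n-1}$. Then every adjacent transposition equals $r^{k}\,c\,\sigma_S\,c^{-1}r^{-k}$. Call $\mathcal A_{\mathsf T}$ once on $r$ and once on $c$; their inverses and powers are then formal word operations. Together with the call on $b^{-1}a$ this would give four calls, so merge as follows: absorb $a$ and $b$ by writing
\[
v=(b^{-1}c)\,\bigl(c^{-1}\pi c\bigr)\,(c^{-1}a),
\]
so the only $\mathsf T$-words needed are $\mathcal A_{\mathsf T}(c^{-1}a)$, $\mathcal A_{\mathsf T}(b^{-1}c)$ and $\mathcal A_{\mathsf T}(r')$, where $r'=c^{-1}rc$ is the rotation in the conjugated frame. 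Each conjugated transposition is then $r'^{k}\,\sigma_S\,r'^{-k}$, because $c^{-1}(\text{transposition of }c(10),c(11))c=\sigma_S$. Verify $v=b^{-1}\pi a$ by composing on each cylinder.

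\emph{Main obstacle.} The delicate point is making the conjugation identity exact as prefix replacements, not just as cylinder bijections. The transposition of $c_k$ and $c_{k+1}$ must act by $c_k\xi\leftrightarrow c_{k+1}\xi$, so $r$ and $c$ must carry cylinders by pure prefix substitution on the relevant cylinders, with no further refinement there. I would choose $r$ as the pure cyclic shift table $h_{C_n,C_n,i\mapsto i+1}$, and $c$ with source code $C_n$ and a target code containing $10$ and $11$ as the images of $c_{n-1}$ and $c_n$. Then check that conjugating $\sigma_S$ by $r'^{k}$ gives exactly the shuffle table $c_{j}\xi\leftrightarrow c_{j+1}\xi$. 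Termination and finiteness are then immediate.
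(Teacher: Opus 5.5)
Your final route is correct once the orientation slip below is fixed, and it does give three calls, but it handles the conjugator differently from the paper. The paper uses the same factorization $v=a_Q\pi_\tau a_P^{-1}$ (so $a_P=a^{-1}$ and $a_Q=b^{-1}$ in your notation), the same bubble sort, and the same rotation conjugates. It keeps the right vine $C_n$ and writes the last transposition as $z_n=X_0^{-(n-3)}\sigma_SX_0^{n-3}$. Since $X_0$ already has the explicit five-flip word of \cref{thm:flip-conjugacy}, the conjugator costs no call. The three calls are simply on $a_P,a_Q,\rho_n$, so the fourth call you set out to eliminate never arises.

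Your merge $v=(b^{-1}c)(c^{-1}\pi c)(c^{-1}a)$ is in effect a change of base code. Take $c(d_i\xi)=c_i\xi$ for a code $D=\{d_1<\cdots<d_n\}$ with $d_{n-1}=10$ and $d_n=11$. Then $c^{-1}a=h_{P,D,\operatorname{id}}$, $b^{-1}c=h_{D,Q,\operatorname{id}}$, and $r'$ is the cyclic shift of $D$. The last adjacent transposition of $D$ is then literally $\sigma_S$. Your version therefore needs no conjugating word and removes the $10(n-3)$ term per transposition in \eqref{eq:V-complete-length-bound}. The paper's version reuses the standard vine and the already established $X_0$ word. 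Both remain polynomial.

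Two slips need fixing.
\begin{enumerate}
\item The direction of $c$ reverses between your ``Collapsing'' paragraph ($10,11\mapsto c_{n-1},c_n$) and your ``Main obstacle'' paragraph ($c_{n-1},c_n\mapsto 10,11$). With the latter, the transposition is $c^{-1}\sigma_Sc$, so the merge must read $(b^{-1}c^{-1})(c\pi c^{-1})(ca)$. Your first bullet's $\rho\sigma_S\rho^{-1}$ is reversed in the same way.
\item The claim in your second bullet, that the mixed transport relations conjugate $\sigma_{\widehat{1^{n-2}}}$ back to $\sigma_S$, is false. Each of those relations outputs some $\sigma_{X'}$ with $X'$ a proper extension of the supporting prefix $U$. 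So for $n\geq4$ they reach $\sigma_{SA}$ but never $\sigma_S$. A conjugator carrying $11\mathcal B$ onto $1\mathcal B$ cannot preserve $1\mathcal B$, so it must involve $\phi_I$, as $X_0$ does. Your final argument does not rely on this claim.
\end{enumerate}
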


\begin{proof}
Write the input table as
\[
 v=h_{P,Q,\tau},\qquad
 P=\{p_1<\cdots<p_n\},\qquad
 Q=\{q_1<\cdots<q_n\},
\]
so that
\[
 v(p_i\xi)=q_{\tau(i)}\xi.
\]
If $n<3$, repeatedly expand the row with lexicographically
least source prefix until the table has three rows.
Henceforth $P,Q,\tau$ and $n$ refer to this expanded table.
This preliminary step increases the input size by at most
an absolute constant.

Use the canonical maximal prefix code
\[
 C_n=\{c_1<\cdots<c_n\},\qquad
 c_j=1^{j-1}0\quad(1\leq j<n),\qquad
 c_n=1^{n-1}.
\]
For example, $C_3=\{0,10,11\}$ and
$C_4=\{0,10,110,111\}$.  Define the cyclic prefix replacements
\[
 a_P(c_i\xi)=p_i\xi, \qquad
 a_Q(c_i\xi)=q_i\xi.
\]
They belong to $\mathsf T$.  Define also the cylinder permutation
\[
 \pi_\tau(c_i\xi)=c_{\tau(i)}\xi.
\]
Evaluation on each $p_i\mathcal B$ gives the factorization
\begin{equation}\label{eq:V-canonical-factorization}
 v=a_Q\pi_\tau a_P^{-1}.
\end{equation}
Compute
\[
 A_P:=\mathcal A_{\mathsf T}(a_P),\qquad
 A_Q:=\mathcal A_{\mathsf T}(a_Q).
\]

Next let $\rho_n\in\mathsf T$ be the cyclic shift of $C_n$,
\[
 \rho_n(c_i\xi)=c_{i+1}\xi\quad(1\leq i<n),
 \qquad
 \rho_n(c_n\xi)=c_1\xi,
\]
and make the third and final call
\[
 R_n:=\mathcal A_{\mathsf T}(\rho_n).
\]
Let $W_n$ be the admissible Farey prefix whose binary code is
$1^{n-2}$; in particular, $W_3=S$.  The shuffle
\[
 z_n:=\sigma_{W_n}
\]
exchanges the last two cylinders
$c_{n-1}\mathcal B$ and $c_n\mathcal B$ and fixes all other cylinders of
$C_n$.  It can be written using only the basic shuffle $\sigma_S$.  Indeed,
$X_0^{-(n-3)}$ sends $10\mathcal B$ and $11\mathcal B$ to those last two
cylinders, and therefore
\begin{equation}\label{eq:canonical-last-shuffle}
 z_n=X_0^{-(n-3)}\sigma_SX_0^{n-3}.
\end{equation}
Here $X_0$ is already the explicit flip word in
\eqref{eq:flip-generators}.

Let $s_i=(i\ i+1)$ for $1\leq i<n$, acting on the cylinders of $C_n$, and
choose the representative $k_i\in\Z$ of least absolute value satisfying
\[
 k_i\equiv i-(n-1)\pmod n.
\]
If the two representatives have the same absolute value, choose the
nonnegative one.  Thus $2|k_i|\leq n$.
Conjugating \eqref{eq:canonical-last-shuffle} by the appropriate cyclic
shift gives
\begin{equation}\label{eq:canonical-adjacent-transposition}
 s_i=\rho_n^{k_i}z_n\rho_n^{-k_i}.
\end{equation}

Finally, use bubble sort to write
\[
 \tau=s_{i_r}\cdots s_{i_1}.
\]
Here $r=\operatorname{inv}(\tau)\leq n(n-1)/2$.  Equations
\eqref{eq:canonical-last-shuffle} and
\eqref{eq:canonical-adjacent-transposition} give the explicit word
\begin{equation}\label{eq:V-algorithm-output}
 \boxed{
 \begin{aligned}
 v={}&A_Q
 \bigl(R_n^{k_{i_r}}z_nR_n^{-k_{i_r}}\bigr)
 \cdots
 \bigl(R_n^{k_{i_1}}z_nR_n^{-k_{i_1}}\bigr)
 A_P^{-1},\\
 z_n={}&X_0^{-(n-3)}\sigma_SX_0^{n-3}.
 \end{aligned}}
\end{equation}
Every symbol on the right side is now a flip or the shuffle $\sigma_S$.
The factorization \eqref{eq:V-canonical-factorization} proves correctness,
and termination follows from the termination of
$\mathcal A_{\mathsf T}$ and the finiteness of the sorting procedure.
The use of finite cylinder permutations is standard in the prefix-code
description of $\mathsf V$; see
\cite[\S6]{CannonFloydParry1996} and
\cite[\S2.1]{BleakQuick2017}.
\end{proof}

For an input table $v=h_{P,Q,\tau}$, its \emph{$\mathsf V$-input size} is $s_{\mathsf V}:=s(P,Q)$, as in \eqref{eq:prefix-table-size}.
Count every flip and every occurrence of the basic shuffle $\sigma_S$ as
one factor, and let $\Lambda_{\mathsf V}(v)$ denote the number of factors in
the word \eqref{eq:V-algorithm-output}.

In the following estimates, $n$ and $\tau$ are those used
after the preliminary expansion, while $s_{\mathsf V}$
denotes the size of the original input table.

\begin{proposition}[Complete output and running time for $\mathsf V$]
	\label{prop:V-complete-complexity}
	Let $r=\operatorname{inv}(\tau)$.
	
	\begin{enumerate}
		\item[\textup{(i)}] \emph{Output bound.} The word produced by the
		flip--shuffle reduction satisfies
		\begin{equation}\label{eq:V-complete-length-bound}
			\Lambda_{\mathsf V}(v)
			\leq
			\Lambda_{\mathsf T}(a_P)+\Lambda_{\mathsf T}(a_Q)
			+r\bigl(
			n\Lambda_{\mathsf T}(\rho_n)+10(n-3)+1
			\bigr).
		\end{equation}
		Consequently,
		\[
		\Lambda_{\mathsf V}(v)=O(s_{\mathsf V}^{5}).
		\]
		If every supporting address is written explicitly, the encoded output
		length is $O(s_{\mathsf V}^{7})$.
		
		\item[\textup{(ii)}] \emph{Running time.} A direct implementation which
		writes the fully expanded output runs in
		\[
		O(s_{\mathsf V}^{7})
		\]
		time.  In particular, the flip--shuffle reduction is polynomial-time.
	\end{enumerate}
\end{proposition}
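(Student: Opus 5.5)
The plan is to read the bound directly off the boxed word \eqref{eq:V-algorithm-output}, and then translate each term into $s_{\mathsf V}$ using \cref{cor:T-input-complexity}.

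\emph{Counting factors.} The outer factors $A_Q$ and $A_P^{-1}$ contribute $\Lambda_{\mathsf T}(a_Q)+\Lambda_{\mathsf T}(a_P)$; inverting a flip word does not change its length, since $\phi_I^{-1}=\phi_I^3$ counts as one factor. Each of the $r$ conjugated transpositions contributes $2|k_i|\Lambda_{\mathsf T}(\rho_n)\le n\Lambda_{\mathsf T}(\rho_n)$, because $2|k_i|\le n$. The core $z_n=X_0^{-(n-3)}\sigma_SX_0^{n-3}$ contributes $2(n-3)\cdot5+1=10(n-3)+1$, since $X_0$ is the five-factor word in \eqref{eq:flip-generators}. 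Summing these gives \eqref{eq:V-complete-length-bound}.

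\emph{Sizes of the $\mathsf T$-inputs.} After expansion, $n\le s_{\mathsf V}+O(1)$. The tables of $a_P$ and $a_Q$ have source $C_n$, and $C_n$ has total length $O(n^2)$. Hence these inputs have size $O(s_{\mathsf V}+n^2)=O(s_{\mathsf V}^2)$. The same holds for $\rho_n$, whose size is $O(n^2)$. By \cref{cor:T-input-complexity}, $\Lambda_{\mathsf T}(a_P),\Lambda_{\mathsf T}(a_Q),\Lambda_{\mathsf T}(\rho_n)=O(s_{\mathsf V}^2)$. With $r\le n^2/2$, the dominant term is $r\,n\,\Lambda_{\mathsf T}(\rho_n)=O(s^2\cdot s\cdot s^2)=O(s_{\mathsf V}^5)$.

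\emph{Encoded length.} For the encoded length, the $\mathsf T$-words for inputs of size $O(s^2)$ have encoded length $O((s^2)^2)=O(s^4)$ by \eqref{eq:T-encoded-output-bound}. The $rn$ copies of $R_n^{\pm}$ then give $O(s^3\cdot s^4)=O(s^7)$. The $X_0$ factors have constant-length addresses, as does $\sigma_S$.

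\emph{Running time.} The three calls to $\mathcal A_{\mathsf T}$ on inputs of size $O(s^2)$ cost $O(s^6)$ each by the corollary. Expansion, bubble sort and the computation of the $k_i$ cost $O(n^2)$. Writing the fully expanded output costs time proportional to its encoded length, $O(s^7)$, which dominates.

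\emph{Main obstacle.} The main care point is the bookkeeping of the input sizes of $a_P$, $a_Q$ and $\rho_n$ in terms of $s_{\mathsf V}$, where the quadratic size of $C_n$ is the source of the extra powers. It is also worth checking whether the cruder bound through $s^2$-size inputs really matches the stated exponents 5 and 7. I would first try to bound $\Lambda_{\mathsf T}(\rho_n)$ directly by $O(n)$, via the defect bound $|\operatorname{Def}|\le2(n-1)$ of \cref{lem:effective-finite-support}. The residual length $q$ is governed by $\ell$-growth, which is $O(n)$ for $\rho_n$ since $\ell$ for $C_n$ points is at most $n$. If that works, the estimates are sharper. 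Otherwise the cruder $O(s^2)$ input-size route already yields the stated $O(s^5)$ and $O(s^7)$.
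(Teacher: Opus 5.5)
Your proposal is correct and follows the paper's proof essentially step for step. You read the factor count off the boxed word \eqref{eq:V-algorithm-output} using $2|k_i|\le n$ and the five-factor word for $X_0$. You bound the three $\mathsf T$-inputs by $O(s_{\mathsf V}+n^2)=O(s_{\mathsf V}^2)$ because of the quadratic prefix length of $C_n$, and then combine \cref{cor:T-input-complexity} with $r=O(s_{\mathsf V}^2)$ and the $rn$ copies of $R_n^{\pm1}$. Your aside about bounding $\Lambda_{\mathsf T}(\rho_n)$ by $O(n)$ would sharpen the exponents, since $\rho_n$ has $n$ rows and target prefixes of length at most $n-1$. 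The paper does not pursue this, and it is not needed for the stated bounds.
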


\begin{proof}
	\emph{(i)}
	The word for $X_0$ has five flip factors, so
	$z_n=X_0^{-(n-3)}\sigma_SX_0^{n-3}$ has
	$10(n-3)+1$ factors. Since $2|k_i|\leq n$, each
	adjacent transposition contributes at most
	\[
	n\Lambda_{\mathsf T}(\rho_n)+10(n-3)+1
	\]
	factors. Summing over the $r$ transpositions and adding
	the two outside words proves
	\eqref{eq:V-complete-length-bound}.
	
	The total prefix length of $C_n$ is $O(n^2)$.
	Thus the three calls to $\mathcal A_{\mathsf T}$ have
	input size $O(s_{\mathsf V}+n^2)=O(s_{\mathsf V}^2)$.
	By \cref{cor:T-input-complexity}, each returns
	$O(s_{\mathsf V}^2)$ factors and
	$O(s_{\mathsf V}^4)$ encoded symbols.
	Since $r=O(s_{\mathsf V}^2)$ and
	$n=O(s_{\mathsf V})$, the factor bound gives
	$\Lambda_{\mathsf V}(v)=O(s_{\mathsf V}^5)$.
	There are at most $rn=O(s_{\mathsf V}^3)$ copies
	of $R_n$ or its inverse in the expanded word, giving
	encoded length $O(s_{\mathsf V}^7)$; the outside
	words and copies of $z_n$ contribute lower-order terms.
	
	\smallskip
	\emph{(ii)}
	Table construction and permutation sorting take at most
	$O(s_{\mathsf V}^3)$ bit operations. The three calls
	to $\mathcal A_{\mathsf T}$ take $O(s_{\mathsf V}^6)$
	time altogether. Expanding and writing the resulting
	word takes $O(s_{\mathsf V}^7)$ time, which dominates
	the preceding computations.
\end{proof}


\section{Boundary presentations as structural applications}
\label{sec:boundary-presentations}

The algorithms above use the boundary realizations of the generators, but not
a presentation of their relations.  We now record the presentation results as
structural applications of the same boundary dictionary.  They are explicit
Tietze forms of known presentations rather than new abstract presentations.

\begin{remark}\label{rem:dehornoy-geometric-presentation}
	Under the binary coding $B\leftrightarrow0$ and $A\leftrightarrow1$,
	the identities in \cref{prop:flip-shuffle-relations} correspond to
	Dehornoy's associativity and commutativity relations at nonempty
	addresses, with product order reversed to match our composition
	convention. Including the empty-address instances gives his
	presentation of $\mathsf V$
	\cite[Proposition~3.18]{Dehornoy2005}.
	The finite presentation in \cref{thm:V-flip-shuffle-presentation}
	is obtained by an explicit change of generators from the
	Bleak--Quick presentation
	\cite[Theorem~2.1]{BleakQuick2017}.
\end{remark}

\begin{theorem}[A presentation of $\mathsf T$ by boundary flips]
	\label{thm:T-flip-presentation}
	Put
	\[
	 \rho:=\phi_I\phi_{SL^2}\phi_I^3\phi_{SL^2}\phi_I,
	 \qquad
	 \omega:=\rho\phi_I.
	\]
	In the free group on $\phi_I$ and $\phi_{SL^2}$, use the abbreviation
	$\phi_{SL}:=(\phi_{SL^2})^{-1}$.
	Then Thompson's group $\mathsf T$ has the finite presentation
	\begin{equation}\label{eq:T-direct-flip-presentation}
		\mathsf T\cong
		\left\langle
		\phi_I,\phi_{SL^{2}}
		\ \middle|\
		\begin{aligned}
			&\phi_I^4=1,\qquad
			\rho^3=1,\qquad
			\omega^5=1,\\
			&\phi_{SL}
			=\phi_I^2\rho\phi_I\rho\phi_I^2,\\
			&[\rho\phi_I\rho,\phi_{SL}]=1,\\
			&[\rho\phi_I\rho,
			\phi_I^2\rho\phi_{SL}\rho^2\phi_I^2]=1
		\end{aligned}
		\right\rangle .
	\end{equation}
\end{theorem}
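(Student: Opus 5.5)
The plan is to obtain \eqref{eq:T-direct-flip-presentation} from the Lochak--Schneps presentation of $\mathsf T$ by Tietze transformations \cite{LochakSchneps1997}. Recall that Lochak--Schneps present $\mathsf T$ on two generators $\alpha,\beta$ with relations $\alpha^4=\beta^3=1$, $(\beta\alpha)^5=1$, and two commutation relations $[\beta\alpha\beta,\alpha^2\beta\alpha\beta\alpha^2]=1$ and $[\beta\alpha\beta,\alpha^2\beta^2\alpha^2\beta\alpha\beta\alpha^2\beta\alpha^2]=1$. Here $\alpha$ is the order-four rotation of the marked edge (four-cycle on the cylinders of length two), and $\beta$ is the order-three rotation around a triangle. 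I would set $\alpha=\phi_I$ and $\beta=\rho$.

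The first step is to check the dictionary in prefix coordinates. By \eqref{eq:base-flip}, $\phi_I$ is the four-cycle $00\to01\to10\to11\to00$, which is precisely $\alpha$. By \eqref{eq:modular-generators-as-flips}, $\rho=L^2$, so $\rho^3=1$. Then $\omega=\rho\phi_I$ is $L^2$ composed with the four-cycle. Using the tables \eqref{eq:L-binary-table} and \eqref{eq:base-flip}, I would verify directly that $\omega$ is the pentagon rotation of period five, matching $(\beta\alpha)^5=1$.

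Next I would express $\phi_{SL}$ in the new generators. The displayed relation $\phi_{SL}=\phi_I^2\rho\phi_I\rho\phi_I^2$ is the identity $L=\phi_I^3\phi_{SL}\phi_I\phi_{SL}\phi_I^3$ of \eqref{eq:modular-generators-as-flips}, solved for $\phi_{SL}$. Concretely, I would check by composing prefix tables that $S\,L^2\phi_I L^2\,S$ acts as $\phi_{SL}$ in \eqref{eq:phi-SL-binary}. Since $\phi_{SL}=\phi_{SL^2}^{-1}$, this relation lets us eliminate $\phi_{SL^2}$ in favour of $\phi_I$ and $\rho$, and conversely it defines $\rho$-consistent data. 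Thus the group given by \eqref{eq:T-direct-flip-presentation} is Tietze-equivalent to a presentation on $\phi_I,\rho$ alone. In that presentation, $\rho$ is redundantly defined by its formula while the relation defines $\phi_{SL^2}$.

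It remains to match the two commutators. Substituting the $\phi_{SL}$ relation, $[\rho\phi_I\rho,\phi_{SL}]$ becomes $[\beta\alpha\beta,\alpha^2\beta\alpha\beta\alpha^2]$. Likewise $\phi_I^2\rho\phi_{SL}\rho^2\phi_I^2$ becomes $\alpha^2\beta\alpha^2\beta\alpha\beta\alpha^2\beta^2\alpha^2$. This should agree with the second Lochak--Schneps commutator word up to conjugation, inversion and use of $\alpha^4=\beta^3=1$. I expect this to be the main obstacle, since conventions (composition order, left versus right actions, and which of $\beta,\beta^{-1}$ is used) differ between the source and this paper. I would first resolve it by evaluating both candidate words as prefix tables: commuting elements with disjoint supports will be visible directly. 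If the words differ, I would then produce an explicit identity in the free product $\langle\alpha,\beta\mid\alpha^4,\beta^3\rangle$ relating them.

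Finally, the map $\alpha\mapsto\phi_I$, $\beta\mapsto\rho$ is onto $G_{\mathrm{flip}}$, since $\phi_{SL}$ and hence all of $X_0,X_1,Y$ of \cref{thm:flip-conjugacy} lie in the image. Combined with the Lochak--Schneps theorem and the Tietze moves above, this yields the isomorphism with $\mathsf T$.
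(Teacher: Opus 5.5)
Your proposal is correct and follows the paper's route, a Tietze equivalence with the Lochak--Schneps presentation under $\alpha=\phi_I$, $\beta=\rho=L^2$. The only difference is the direction of elimination: you eliminate $\phi_{SL^2}$, after which the leftover relation $\beta=\rho$ reduces, already in the free group, to $(\beta\alpha)^5=1$, whereas the paper adjoins $\phi_{SL^2}$ and eliminates $\beta$.

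The obstacle you flag resolves just as you anticipate. Modulo $\alpha^4=\beta^3=1$, put $g=\alpha^2\beta^2\alpha^2$; then $\alpha^2\beta^2\alpha^2\beta\alpha\beta\alpha^2\beta\alpha^2=g(\beta\alpha\beta)g^{-1}$ and $\alpha^2\beta\alpha^2\beta\alpha\beta\alpha^2\beta^2\alpha^2=g^{-1}(\beta\alpha\beta)g$. Conjugating by $g$ therefore interchanges the two forms of the second commutator relation, a word-order point the paper's proof does not discuss.
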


\begin{proof}
	A direct calculation from the prefix tables gives
	\[
	\rho
	=\phi_I\phi_{SL^2}\phi_I^3\phi_{SL^2}\phi_I
	=L^2.
	\]
	Start with the Lochak--Schneps presentation with generators
	$\alpha$ and $\beta$, using the conjugate relator
	$(\beta\alpha)^5=1$; see
	\cite[Theorem~1]{LochakSchneps1997}.  Adjoin a generator
	$\phi_{SL^2}$ subject to
	\[
	(\phi_{SL^2})^{-1}
	=\alpha^2\beta\alpha\beta\alpha^2.
	\]
	The prefix-table identity
	\[
	\beta
	=\alpha\phi_{SL^2}\alpha^{-1}\phi_{SL^2}\alpha
	\]
	then permits the elimination of $\beta$.  Using $\alpha^4=1$, the
	eliminated word is precisely $\rho$, while
	$\beta\alpha$ becomes $\omega$.  The two commutator relators become
	\[
	[\rho\phi_I\rho,\phi_{SL}]=1
	\]
	and
	\[
	[\rho\phi_I\rho,
	\phi_I^2\rho\phi_{SL}\rho^2\phi_I^2]=1.
	\]
	Thus \eqref{eq:T-direct-flip-presentation} is obtained from the
	Lochak--Schneps presentation by Tietze transformations and is therefore
	complete.
\end{proof}

Define the following abbreviations:
\begin{align*}
	\chi&=(\omega\phi_I)\phi_I^2(\omega\phi_I),\\
	\alpha&=\phi_I^{-2}\sigma_S\phi_I^2,
	&\beta&=\phi_I^{-1}\alpha,\\
	\gamma&=\chi\sigma_S\chi^{-1},
	&D&=\alpha\gamma\alpha\gamma\phi_I^2.
\end{align*}
In the relations below, $x^y$ means $y^{-1}xy$.

\begin{theorem}[A finite presentation of $\mathsf V$
	by boundary flips and shuffles]
	\label{thm:V-flip-shuffle-presentation}
	Thompson's group $\mathsf V$ has a presentation with generators
	\[
	\phi_I,\qquad \phi_{SL^2},\qquad \sigma_S
	\]
	and the following nine relations:
	\begin{align}
		&\phi_I^4=1,\qquad
		\sigma_S^2=1,\qquad
		\beta^3=1,
		\label{eq:V-finite-torsion}\\
		&\gamma^{\alpha\gamma}=\alpha,
		\label{eq:V-finite-conjugacy}\\
		&\gamma
		=\alpha^{\beta\gamma\alpha\gamma\alpha\alpha^{\beta\alpha}}
		\alpha^{\beta^{-1}\gamma\alpha\gamma\alpha
			\alpha^{\beta^{-1}\alpha}},
		\label{eq:V-finite-splitting}\\
		&\left[
		\alpha^{\beta\gamma\alpha\gamma},
		\alpha^{\beta^{-1}\gamma\alpha\gamma
			\alpha^\beta\alpha^{\beta^{-1}\alpha}}
		\right]=1,
		\label{eq:V-finite-commutation-one}\\
		&\left[
		\alpha^{\beta^{-1}\gamma\alpha\gamma},
		\alpha^{\beta^{-1}\gamma\alpha\gamma
			\alpha^\beta\alpha^{\beta^{-1}\alpha}}
		\right]=1,
		\label{eq:V-finite-commutation-two}\\
		&\left[
		\alpha^{\beta\gamma\alpha^{\beta\gamma\alpha}},
		\alpha^{\beta\gamma\alpha\gamma
			\alpha^\beta\alpha^{\beta^{-1}\alpha}}
		\right]=1,
		\label{eq:V-finite-commutation-three}\\
		&\phi_{SL^2}
		=D^{-1}\phi_I D^{-1}\phi_I^{-2}.
		\label{eq:V-finite-flip-recovery}
	\end{align}
\end{theorem}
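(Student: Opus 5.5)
The plan is to obtain the presentation by Tietze transformations from the Bleak--Quick presentation of $\mathsf V$ \cite[Theorem~2.1]{BleakQuick2017}, in the same way \cref{thm:T-flip-presentation} was obtained from Lochak--Schneps. That presentation has two torsion generators, an involution and an element of order three. It has one conjugacy relation, one splitting relation expressing a generator as a product of two conjugates, and three commutator relations. Altogether there are eight relations in two generators, of the shape displayed in \eqref{eq:V-finite-torsion}--\eqref{eq:V-finite-commutation-three}. The first step is to realize the Bleak--Quick generators as explicit prefix tables and match them with the abbreviations. The candidates are $\alpha=\phi_I^{-2}\sigma_S\phi_I^2$, which is the shuffle exchanging $00\mathcal B$ and $01\mathcal B$ since $\phi_I^2$ swaps $0\mathcal B$ and $1\mathcal B$, and $\beta=\phi_I^{-1}\alpha$. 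One then checks, by composing the tables \eqref{eq:base-flip} and \cref{def:shuffle}, that these coincide with the Bleak--Quick involution and order-three element. The auxiliary elements $\gamma=\chi\sigma_S\chi^{-1}$ and $D$ are identified by the same kind of table computation. Here $\chi$ is built from $\omega$ and $\phi_I$, hence lies in $\mathsf T$, so $\gamma$ is a transposition of two specific cylinders.

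The second step is to express the boundary generators back in terms of the Bleak--Quick generators. This gives $\sigma_S=\phi_I^2\alpha\phi_I^{-2}$ and $\phi_I=\alpha\beta^{-1}$, using $\alpha^2=1$. Then $\sigma_S$ and $\phi_I$ are words in $\alpha,\beta$. Next I would verify \eqref{eq:V-finite-flip-recovery} on prefix tables: $D=\alpha\gamma\alpha\gamma\phi_I^2$ should be a cyclic element, probably a modular element such as $L$ or $L^2$ in view of \eqref{eq:modular-generators-as-flips}. The relation should then reduce to the identity $\phi_{SL^2}=D^{-1}\phi_ID^{-1}\phi_I^{-2}$, which can be read off from the tables.

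With these identities in place, the Tietze argument runs as follows. Start with the Bleak--Quick presentation on $\alpha,\beta$. Adjoin $\phi_I:=\alpha\beta^{-1}$ and $\sigma_S:=\phi_I^2\alpha\phi_I^{-2}$, and then adjoin $\phi_{SL^2}$ by \eqref{eq:V-finite-flip-recovery}. Next eliminate $\alpha$ and $\beta$ through the inverse substitutions $\alpha=\phi_I^{-2}\sigma_S\phi_I^2$ and $\beta=\phi_I^{-1}\alpha$. The defining relations of the new generators for $\phi_I$ and $\sigma_S$ become tautologies. The one exception is that the consistency condition $\phi_I=\alpha\beta^{-1}$ becomes trivial only after using $\alpha^2=1$, which is equivalent to $\sigma_S^2=1$. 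The torsion relation $\alpha^2=1$ thus turns into $\sigma_S^2=1$, and $\beta^3=1$ stays as written. The Bleak--Quick relations then transcribe verbatim, in the abbreviations, into \eqref{eq:V-finite-conjugacy}--\eqref{eq:V-finite-commutation-three}. Finally, $\phi_I^4=1$ must be accounted for. Either it is a consequence of the other relations and is listed for convenience, or, more likely, it is needed because $\phi_I$ was defined as a word in $\alpha,\beta$, so the relation count is three torsion relations, five structural ones, and the recovery relation, giving nine.

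The main obstacle is the exact matching of $\gamma$ and of the intermediate elements $\chi$, $D$ with the Bleak--Quick conventions. These include the direction of conjugation ($x^y=y^{-1}xy$), the left/right composition order, and which cylinders their generators transpose. A mismatch would change the relators by conjugation or inversion. I would first compute all of $\rho$, $\omega$, $\chi$, $\gamma$, $D$ as reduced prefix tables and compare them cylinder by cylinder with Bleak--Quick's defining tables. Where they agree only up to an automorphism, for instance the reversal $0\leftrightarrow1$, I would apply that automorphism to their presentation before performing the Tietze moves.
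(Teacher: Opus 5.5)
\textbf{There is a genuine gap: your Tietze scheme is circular.} You plan to start from a presentation on $\alpha,\beta$ alone, treat $\gamma$ and $D$ as auxiliary elements, and then ``adjoin $\phi_{SL^2}$ by \eqref{eq:V-finite-flip-recovery}''. But $\phi_{SL^2}$ occurs on the right-hand side of that relation:
\[
D=\alpha\gamma\alpha\gamma\phi_I^2,\qquad
\gamma=\chi\sigma_S\chi^{-1},\qquad
\chi=(\omega\phi_I)\phi_I^2(\omega\phi_I),\qquad
\omega=\rho\phi_I,
\]
and $\rho=\phi_I\phi_{SL^2}\phi_I^3\phi_{SL^2}\phi_I$. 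So \eqref{eq:V-finite-flip-recovery} is an implicit equation, not a definition $\phi_{SL^2}=w(\alpha,\beta)$, and adding it is not a Tietze generator addition.

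For the same reason, \eqref{eq:V-finite-conjugacy}--\eqref{eq:V-finite-commutation-three} involve $\phi_{SL^2}$ through $\gamma$. They therefore cannot arise just by substituting $\alpha=\phi_I^{-2}\sigma_S\phi_I^2$ and $\beta=\phi_I^{-1}\alpha$ into relations in two letters. Your count also does not close: the Bleak--Quick relations you list number seven, not eight, and you leave the origin of $\phi_I^4=1$ to speculation.

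\textbf{The missing idea: $\gamma$ is the image of a third generator.} The paper uses the three-generator, eight-relation Bleak--Quick presentation on $a,b,c$, where $c$ exchanges $1\mathcal B$ and $00\mathcal B$. It puts
\[
p=ab^{-1},\qquad z=p^2ap^{-2},\qquad d=acacp^2,\qquad q=d^{-1}pd^{-1}p^{-2},
\]
all genuine words in $a,b,c$, and checks on tables that $p=\phi_I$, $z=\sigma_S$, $d=X_0$ and $q=\phi_{SL^2}$. It then eliminates the old generators by
\[
a=p^{-2}zp^2,\qquad b=p^{-1}a,\qquad c=(dp^2d)\,z\,(dp^2d)^{-1}.
\]
In the last identity $d=X_0=\omega\phi_I$ is rewritten in $p,q$. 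This identity holds in $\mathsf V$, so it follows from the Bleak--Quick relations.

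Eliminating $c$ does all the work. It turns the honest defining relation of $q$ into the implicit relation \eqref{eq:V-finite-flip-recovery}, with $d$ becoming $D$. It also turns the Bleak--Quick relators written in $c$ into the displayed relations written in $\gamma$. Without $c$ as an intermediary you have no legitimate way to introduce $\phi_{SL^2}$.

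\textbf{Smaller corrections.} The relation $\phi_I^4=1$ is the image of the Bleak--Quick relator $(ab)^4$: since $\alpha\beta=\alpha\phi_I^{-1}\alpha$, one gets $(\alpha\beta)^4=\alpha\phi_I^{-4}\alpha$ once $\alpha^2=1$, that is $\sigma_S^2=1$, is available. That is where $\sigma_S^2=1$ is actually used. By contrast, $\alpha\beta^{-1}=\phi_I$ is a tautology after the substitution and needs no torsion relation. Finally, $D$ is $X_0=L^2S$, not $L$ or $L^2$.
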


\begin{proof}
	Start with the three-generator, eight-relation presentation
	$\langle a,b,c\mid\mathcal R_{\mathrm{BQ}}\rangle$
	of \cite[Theorem~2.1, equation~(2.3)]{BleakQuick2017}.
	Passing from their right actions to our left-composition convention
	by taking inverse maps, the generators have the following actions:
	$a$ exchanges $00\xi$ and $01\xi$;
	$b$ sends
	\[
	01\xi\longmapsto11\xi\longmapsto10\xi\longmapsto01\xi;
	\]
	and $c$ exchanges $1\xi$ and $00\xi$.
	Each fixes the complement of the indicated cylinders.
	
	Put
	\[
	p=ab^{-1},\qquad
	z=p^2ap^{-2},\qquad
	d=acacp^2,\qquad
	q=d^{-1}pd^{-1}p^{-2}.
	\]
	Direct composition of these prefix replacements gives
	\[
	p=\phi_I,\qquad z=\sigma_S,\qquad
	d=X_0,\qquad q=\phi_{SL^2}.
	\]
	In particular, \eqref{eq:flip-generators} gives $X_0=\rho\phi_I^2=\omega\phi_I$, so $d=\omega p$, with $\omega$ evaluated in $p,q$.
	
	Conversely,
	\[
	a=p^{-2}zp^2,\qquad b=p^{-1}a.
	\]
	The prefix table
	\[
	dp^2d:\quad
	0\xi\mapsto01\xi,\qquad
	10\xi\mapsto1\xi,\qquad
	11\xi\mapsto00\xi
	\]
	also gives
	\[
	c=(dp^2d)\,z\,(dp^2d)^{-1}.
	\]
	Thus $a,b,c$ are precisely the words
	$\alpha,\beta,\gamma$ displayed above.
	
	Adjoin $p,z,q$ to the Bleak--Quick presentation by their defining
	relations. The recovery identities permit the elimination of
	$a,b,c$. Their eight relators become the relations
	\eqref{eq:V-finite-torsion}--%
	\eqref{eq:V-finite-commutation-three}:
	indeed, $a^2=1$ becomes $z^2=1$, and, using this relation,
	$(ab)^4=1$ becomes $p^4=1$.
	The defining relation for $q$ becomes
	\eqref{eq:V-finite-flip-recovery}, since $d$ becomes $D$.
	These generator additions and eliminations preserve the presented
	group. Hence the nine displayed relations present $\mathsf V$.
\end{proof}



\section*{Acknowledgements}
The author thanks IHES for their hospitality where most of this work was written. This work was supported by Galatasaray Research Grant FBA-2024-1236.

\end{document}